\newcommand{\grad}{\nabla}
\newtheorem{lemma}{Lemma}
 \newtheorem{remark}{Remark}
\title{\textbf{Hybrid coupling of finite element and boundary element methods using Nitsche's method and the Calderon projection
}} \author[1]{Timo Betcke \thanks{email: t.betcke@ucl.ac.uk}}
\author[1]{Micha\l{} Bosy \thanks{email: m.bosy@ucl.ac.uk}}
\author[1]{Erik Burman \thanks{email: e.burman@ucl.ac.uk}}
\affil[1]{\small \textit{Department of Mathematics, University College London, 25 Gordon Street, WC1H 0AY, London, United Kingdom}}
\date{\today}
\providecommand{\keywords}[1]{\textbf{Key words.} #1}
\begin{document}
\maketitle

\begin{abstract}
In this paper we discuss a hybridised method for FEM-BEM coupling. The coupling from both sides use a Nitsche type approach to couple to the trace variable. This leads to a formulation that is robust and flexible with respect to approximation spaces and can easily be combined as a building block with other hybridised methods. Energy error norm estimates and the convergence of Jacobi iterations are proved and the performance of the method is illustrated on some computational examples.

\keywords{FEM-BEM coupling, Nitsche's method and hybridised methods}
\end{abstract}

\section{Introduction}

The coupling of finite element (FEM) and boundary element (BEM) methods is the most widely used approach for solving multi-physical problems on an unbounded domain. It allows to take advantage of both methods. On the one hand, the BEM reduces the dimension of the problem by using the boundary integral equation, hence it is commonly used in exterior unbounded domains. On the other hand, the FEM is known from their robustness and universal applicability even for problem of inhomogeneous or non-linear nature.

The first coupled procedure was introduced by Zienkiewicz, Kelly and Bettess \cite{MR451784}. It has been analysed by Brezzi, Johnson and N\'ed\'elec~\cite{MR566158}, \cite{MR569615} and~\cite{MR583487} for problem in unbounded domains. It is often referred to as the Johnson-N\'ed\'elec coupling. Extension for higher order equations was considered by Wendland~\cite{MR902859}. The convergence analysis requires compactness of the double layer potential that can be obtain on the smooth boundary. Furthermore, even for a symmetric discretisation
scheme, the coupling method produces a system of equations with a non-symmetric coefficient matrix.

In order to avoid these disadvantages, a symmetric coupling of FEM and BEM was devised by Costabel~\cite{MR965328} and Han~\cite{MR1299224}. The independence of the compactness condition was obtain by using both equations of the Calder\'on system, contrary to the previously introduced methods that employ only one of the two equations of the Calder\'on system. Some years later, Sayas~\cite{MR2551202} showed that the weaker assumption of a Lipschitz coupling interface is sufficient for the Johnson-N\'ed\'elec coupling. His analysis has since been simplified by Steinbach~\cite{MR2831059}.

More recent developments have focused on the coupling of BEM with mixed FEM. In~\cite{MR1421985} and~\cite{MR1391614} the authors analysed symmetric coupling of BEM and mixed FEM that uses Raviart-Thomas elements. Further work on coupling BEM with mixed FEM with such elements as Brezzi-Douglas-Marini or Brezzi-Douglas-Marini-Fortin spaces was analysis by Carstensen and Funken~\cite{MR1773269}. 

Gatica, Heuer and Sayas~\cite{MR2240630} and~\cite{MR2629997} introduced the first the coupling of BEM and discontinuous Galerkin (DG) methods, in order to exploit the possibility to easily use high order approximation in the latter. Another coupling of interior penalty DG methods with BEM was presented by Of, Rodin, Steinbach and Taus~\cite{MR2970268}. 
A general approach using the unified hybridisation technique was presented by Cockburn and Sayas~\cite{MR2954729}. The class of FEM considered includes the mixed, the DG and the hybridisable discontinuous Galerkin (HDG) methods. Further collaboration of these authors with G\'uzman led to a new convergence results published in~\cite{MR3022242}.

In this paper, we present the coupling of FEM and BEM using weak imposition of coupling conditions. Nitsche's method~\cite{MR341903} is widely used in the context of FEM for imposing boundary conditions. In addition, methods based on Nitsche's method have been successfully utilized for BEM domain decomposition problems in~\cite{MR2561551} and~\cite{MR2945613}, and more recently for weakly imposing boundary conditions for BEM in~\cite{MR3945809}. 
Merging these two approaches for FEM and BEM we weakly impose both coupling conditions in a hybridised formulation on the boundary. The hybridisation is made by introducing a trace variable and imposing the coupling  in the form of a Nitsche type Dirichlet condition on the two systems. The use of Nitsche's method allows us to use the Dirichlet trace as the hybrid variable, ensuring continuity by a consistent penalty term.  The test function partner of the trace variable, then acts to ensure continuity of fluxes. The global system can be constructed using arbitrary approximation orders for the two sub systems and the trace variable and the sub problem can be solved independently. The stability of the method poses no constraint on the approximation spaces and mesh refinement does not require special treatment as in the case of Johnson-N\'ed\'elec coupling. This means that the two systems can have independent meshes, that both have to be integrated only against the trace variable. 
We here consider the standard continuous FEM, but the  formulation is by and large agnostic to the choice of FEM used in the bulk and the method can be applied with discontinuous FEM as well, such as DG, HDG~\cite{MR2485455}, or HHO~\cite{MR3259024} using a hybridised coupling on the interior domain boundary. In the case of using discontinuous FEM, our formulation can be interpreted as a hybridised interior penalty formulation of the class of methods discussed in~\cite{MR2954729}. Finally we note that, thanks to the use of Nitsche type mortaring, the method proposed herein can be used in the framework for unfitted hybridised methods introduced in \cite{MR4021278}. In that case a surface mesh is required for the definition of the BEM method, but the FEM approximation on the interior domain can be computed on an unfitted bulk discretisation.

As many existing approaches of coupling FEM and BEM, we use Finite Element Tearing And Interconnecting (FETI) and Boundary Element Tearing And Interconnecting (BETI) type of methods~\cite{MR2235732} to solve the reduced system for the hybrid variable. FETI is formulated using a Schur complement formulation, while BETI is usually formulated in terms of Steklov-Poincar\'e operators. Although Nitsche's method is an established framework for domain decomposition for finite elements methods such as FETI, it was not recognised by BETI community. In this paper, we demonstrate how the hybrid Nitsche approach can be integrated into the  BETI framework.

The rest of the paper is organized as follows. We introduce the model problem in this section. In Section~\ref{sec:var_form}, we present on continuous level the symmetric coupling of BEM and FEM formulation known from~\cite{MR965328} and~\cite{MR1299224}. The discrete formulation including weakly imposed coupling condition is introduced and analysed in Section~\ref{sec:discrete_formulation}. Although the formulation obtained is not symmetric, we comment of  how symmetry can be obtained. We discuss iterative domain decomposition in the model case of a simple relaxed Jacobi algorithm in Section~\ref{sec:iterative} and prove its convergence. In Section~\ref{sec:numerics}, we present some numerical results and we conclude with some remarks in Section~\ref{sec:conclusions}.

\subsection{Model problem}
\label{sec:notation}

Let us consider the unbounded domain $\Omega = \mathbb{R}^3$.
We divide $\Omega$ into a bounded internal part $\Omega^-$ and an unbounded external part $\Omega^+$ with common Lipschitz boundary $\Gamma$, with $n$ the outer unit normal vector of the domain $\Omega^{-}$ on $\Gamma$. 
We let $\partial_n u := \tfrac{\partial u}{\partial_n}$ denote the outward normal derivative, $f \in L^2(\Omega)$ be a function with support in $\Omega^-$ and introduce a function $\varepsilon\in L^\infty(\Omega)$, $\epsilon \ge 0$. Then we can formulate our model problem as follows
\begin{equation}
\label{eq:divided_problem}
    \left\{
        \begin{array}{rcll}
            - \Delta u^- + \varepsilon u& = & f & \mbox{in } \Omega^-, \\
            - \Delta u^+ & = & 0 & \mbox{in } \Omega^+, \\
             u^- & = & u^+  & \mbox{on } \Gamma, \\
             \partial_n u^- & = & \partial_n u^+ & \mbox{on } \Gamma, \\
             |u^+| & \rightarrow & 0 & \mbox{while } |x| \rightarrow \infty.
        \end{array}
    \right.
\end{equation}
\thanks{
where $u^i = u|_{\Omega^i}$ , $i \in \{-, +\}$. The function $\varepsilon$ is introduced to make the interior problem heterogeneous and hence unsuitable for treatment using the boundary element method alone.

\begin{remark}
It is straightforward to extend the discussion to the case with a smoothly varying diffusion coefficient in $\Omega^-$ which has a jump over $\Gamma$.
\end{remark}

\section{Variational formulation}
\label{sec:var_form}

Let $\left<\cdot,\cdot \right>_{\Gamma}$  denote the $L^2(\Gamma)$-inner product that can be extended to a duality pairing on $H^{-\frac{1}{2}}(\Gamma) \times H^{\frac{1}{2}}(\Gamma)$. 
We recall the following result.
\begin{lemma}[Duality pairing relation]
For any $\lambda \in H^{-\frac{1}{2}}(\Gamma)$ and $v \in H^{\frac{1}{2}}(\Gamma)$, the following relation holds
\begin{equation}
\label{eq:duality_pairing}
\left<\lambda,v\right>_{\Gamma} \leq \left\|\lambda \right\|_{H^{-\frac{1}{2}}(\Gamma)} \left\|v \right\|_{ H^{\frac{1}{2}}(\Gamma)}.
\end{equation}
\end{lemma}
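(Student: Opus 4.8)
The plan is to treat this inequality as the defining property of the dual norm on $H^{-\frac{1}{2}}(\Gamma)$. Recall that $H^{-\frac{1}{2}}(\Gamma)$ is realised as the topological dual of $H^{\frac{1}{2}}(\Gamma)$, and that the pairing $\left<\cdot,\cdot\right>_{\Gamma}$ appearing in the statement is precisely the extension of the $L^2(\Gamma)$ inner product to this duality pairing, as announced in the text. Under this identification the norm on the negative order space is, by definition,
\[
\|\lambda\|_{H^{-\frac{1}{2}}(\Gamma)} = \sup_{0 \neq w \in H^{\frac{1}{2}}(\Gamma)} \frac{\left<\lambda,w\right>_{\Gamma}}{\|w\|_{H^{\frac{1}{2}}(\Gamma)}}.
\]

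First I would dispose of the trivial case $v = 0$, for which both sides vanish and the inequality holds with equality. For $v \neq 0$, the function $v$ is one admissible competitor in the supremum above, so that
\[
\frac{\left<\lambda,v\right>_{\Gamma}}{\|v\|_{H^{\frac{1}{2}}(\Gamma)}} \leq \sup_{0 \neq w \in H^{\frac{1}{2}}(\Gamma)} \frac{\left<\lambda,w\right>_{\Gamma}}{\|w\|_{H^{\frac{1}{2}}(\Gamma)}} = \|\lambda\|_{H^{-\frac{1}{2}}(\Gamma)}.
\]
Multiplying through by the positive quantity $\|v\|_{H^{\frac{1}{2}}(\Gamma)}$ then yields the claimed bound. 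In essence the estimate is nothing more than the generalised Cauchy--Schwarz (H\"older-type) inequality for a dual pairing, and the argument is immediate once the dual norm is written out.

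There is no genuine analytic obstacle here; the only point requiring care is the consistency of conventions. One must confirm that the norm $\|\cdot\|_{H^{-\frac{1}{2}}(\Gamma)}$ used throughout the paper is indeed the dual norm induced by the pairing $\left<\cdot,\cdot\right>_{\Gamma}$, rather than an intrinsic Sobolev--Slobodeckij norm that would agree with it only up to a norm-equivalence constant. Provided the dual-norm convention is adopted, as is standard in the boundary element literature, the inequality holds sharply with constant one, and no further estimates are needed.
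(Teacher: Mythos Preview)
Your proof is correct and follows exactly the same approach as the paper: handle the trivial case $v=0$, then invoke the definition of the dual norm $\|\lambda\|_{H^{-\frac{1}{2}}(\Gamma)} = \sup_{0\neq w} \left<\lambda,w\right>_{\Gamma}/\|w\|_{H^{\frac{1}{2}}(\Gamma)}$ and note that $v$ is an admissible competitor. Your additional remark about the convention for the negative-order norm is a fair caveat, though the paper indeed adopts the dual-norm definition, so the constant is one.
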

\begin{proof}
It is obvious for $v = 0$ and immediately by the definition of the dual norm
\begin{equation*}
\left\|\lambda \right\|_{H^{-\frac{1}{2}}(\Gamma)} = \sup_{0 \neq v \in H^{\frac{1}{2}}(\Gamma)} \tfrac{\left<\lambda,v\right>_{\Gamma}}{\left\|v \right\|_{ H^{\frac{1}{2}}(\Gamma)}}.
\end{equation*}
\end{proof}
We start with the variational formulation of the internal problem. Applying integration by parts for first equation of~\eqref{eq:divided_problem} for every $v \in H_0^1(\Omega^-)$ we have
\begin{equation}
\label{eq:fem}
 \int_{\Omega^-} \grad u \cdot \grad v ~dx +  \int_{\Omega^-} \varepsilon u  v ~dx - \left<\partial_n u, \ v \right>_{\Gamma} = \int_{\Omega^-} f v ~dx.
\end{equation}
We define the Green's function for the Laplace operator in $\mathbb{R}^3$ as follows
\begin{equation*}
 G(x,y) := \tfrac{1}{4 \pi | x - y |}.
\end{equation*}
In this paper, we focus on the problem in $\mathbb{R}^3$. A similar analysis can be used for problems in $\mathbb{R}^2$, in which case this definition should be replaced by $G(x,y) := \frac{\log{| x - y |}}{2\pi}$.
Following the standard approach (see, e.g.~\cite[Chapter~6]{MR2361676}), we introduce single layer and double layer operators $\mathcal{V}:H^{-\frac{1}{2}}(\Gamma) \rightarrow H^1(\Omega^+)$ and $\mathcal{K}:H^{\frac{1}{2}}(\Gamma) \rightarrow H^1(\Omega^+)$ respectively as
\begin{align*}
 (\mathcal{V} \varphi) (x) &:= \int_{\Gamma} G(x,y) \varphi(y)~ dy && \mbox{for } \varphi \in H^{-\frac{1}{2}}(\Gamma), \\
 (\mathcal{K} v) (x) &:= \int_{\Gamma} \tfrac{\partial G(x,y)}{\partial n_y} v(y) ~dy && \mbox{for } v \in H^{\frac{1}{2}}(\Gamma),
\end{align*}
where $x \in \Omega^+ \setminus \Gamma$ and $n_y$ is an outer unit normal vector (for $\Omega^i$, $i \in \{-, +\}$) in the point $y$.

Following~\cite[Chapter~1]{MR2361676}, we define the Dirichlet and Neumann traces
\begin{align*}
\gamma_D^i: H^1(\Omega^i) &\rightarrow H^{\frac{1}{2}}(\Gamma) & \gamma_D^i f(x) & := \lim_{\Omega^i \ni y \rightarrow x \in \Gamma}  f(y), \\
\gamma_N^i: H^1(\Delta,\Omega^i) &\rightarrow H^{-\frac{1}{2}}(\Gamma) & \gamma_N^i f(x) & := \lim_{\Omega^i \ni y \rightarrow x \in \Gamma}  n_x \cdot \grad f(y),
\end{align*}
where $H^1(\Delta,\Omega^i) := \left\{v \in H^1(\Omega^i): \Delta v \in L^2(\Omega^i)\right\}$, for $i \in \{-, +\}$, and $n_x$ is an outer (for $\Omega^-$) normal vector to $\Gamma$ in the point $x$. 
 The following results will be  useful in what follows.
\begin{lemma}[Trace theorem]
Let $i \in \{-, +\}$, then for $\Omega^i \subset \mathbb{R}^3$ the trace operator $\gamma_D^i: H^1(\Omega^i) \rightarrow H^{\frac{1}{2}}(\Gamma)$ is bounded for all $v \in H^1(\Omega^i)$
\begin{equation}
\label{eq:trace_theorem}
 \|\gamma_D^i v \|_{H^{\frac{1}{2}}(\Gamma)} \leq C_T \|v \|_{H^1(\Omega^i)}.
\end{equation}
\end{lemma}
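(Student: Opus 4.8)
The plan is to reduce the estimate to the classical trace inequality on a half-space by means of a partition of unity and local bi-Lipschitz charts flattening $\Gamma$, exploiting that the Dirichlet trace is a purely local object which only senses $v$ in a neighbourhood of $\Gamma$. This locality is what allows the same argument to cover both the bounded domain $\Omega^-$ and the unbounded domain $\Omega^+$: fixing a bounded collar neighbourhood $\mathcal{N}^i \subset \Omega^i$ of $\Gamma$, one has $\gamma_D^i v = \gamma_D^i(v|_{\mathcal{N}^i})$, so it suffices to bound $\|\gamma_D^i v\|_{H^{\frac{1}{2}}(\Gamma)}$ by $\|v\|_{H^1(\mathcal{N}^i)} \le \|v\|_{H^1(\Omega^i)}$, and the unboundedness of $\Omega^+$ plays no role.

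First I would cover $\Gamma$ by finitely many open sets $U_1,\dots,U_M$ on each of which, after a rigid motion, $\Gamma \cap U_j$ is the graph of a Lipschitz function and $\Omega^i \cap U_j$ lies on one side of it; this is possible precisely because $\Gamma$ is Lipschitz. Choosing a subordinate partition of unity $\{\chi_j\}$ with $\sum_j \chi_j \equiv 1$ near $\Gamma$, I would write $\gamma_D^i v = \sum_j \gamma_D^i(\chi_j v)$ and estimate each term separately. On each patch a bi-Lipschitz change of variables maps $\Omega^i \cap U_j$ onto the upper half-space $\mathbb{R}^3_+$ and $\Gamma \cap U_j$ onto the hyperplane $\{x_3 = 0\}$; since the Jacobian and its inverse lie in $L^\infty$, this transformation distorts the $H^1$ norm only by constants.

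The core estimate is the half-space trace inequality, which I would establish by Fourier analysis on $\mathbb{R}^2 = \partial \mathbb{R}^3_+$. For $w$ smooth and rapidly decaying, writing $|\hat{w}(\xi',0)|^2 = -\int_0^\infty \partial_{x_3}|\hat{w}(\xi',x_3)|^2\, dx_3$, bounding the integrand by $2|\hat{w}||\partial_{x_3}\hat{w}|$ and applying Young's inequality $2ab \le \alpha a^2 + \alpha^{-1} b^2$ with $\alpha = (1+|\xi'|^2)^{1/2}$, one obtains after multiplication by $(1+|\xi'|^2)^{1/2}$ and integration in $\xi'$ the bound
\begin{equation*}
\int_{\mathbb{R}^2} (1+|\xi'|^2)^{1/2} |\hat{w}(\xi',0)|^2\, d\xi' \le \int_{\mathbb{R}^2}\int_0^\infty \left[ (1+|\xi'|^2)|\hat{w}|^2 + |\partial_{x_3}\hat{w}|^2 \right] dx_3\, d\xi',
\end{equation*}
whose left-hand side is $\|w(\cdot,0)\|_{H^{\frac{1}{2}}(\mathbb{R}^2)}^2$ and whose right-hand side is controlled by $\|w\|_{H^1(\mathbb{R}^3_+)}^2$ via Plancherel's theorem. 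A density argument then extends the inequality to all $w \in H^1(\mathbb{R}^3_+)$.

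Finally I would transport this estimate back to $\Gamma$ and sum over the patches. The expected main obstacle is not the half-space computation but the bookkeeping at the Lipschitz boundary: one must verify that the intrinsic $H^{\frac{1}{2}}(\Gamma)$ norm is equivalent, patch by patch, to the norm of the pulled-back trace on the flat hyperplane, even though the flattening map is merely bi-Lipschitz rather than smooth. This equivalence — essentially the invariance of $H^{\frac{1}{2}}$ on a Lipschitz graph under bi-Lipschitz reparametrisation, together with the boundedness of multiplication by the cut-offs $\chi_j$ on $H^{\frac{1}{2}}$ — is the technical heart of the argument and is where the Lipschitz (rather than smooth) regularity of $\Gamma$ must be treated with care; for the purposes of this paper it may simply be quoted from standard references such as \cite[Chapter~1]{MR2361676}.
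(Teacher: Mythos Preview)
Your argument is correct and is essentially the standard proof of the trace theorem on Lipschitz domains: localise via a partition of unity, flatten each patch by a bi-Lipschitz chart, invoke the half-space estimate obtained by tangential Fourier transform, and reassemble. Your treatment of the unbounded domain $\Omega^+$ via a bounded collar is the right observation, and your half-space computation is clean and correct. You also correctly flag the genuine technical point for merely Lipschitz $\Gamma$, namely the bi-Lipschitz invariance of the intrinsic $H^{\frac12}(\Gamma)$ norm and the boundedness of multiplication by smooth cut-offs on $H^{\frac12}$.

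By contrast, the paper does not give a proof at all: it simply refers to \cite[Theorem~2.21]{MR2361676}. So there is no ``paper's approach'' to compare against beyond the citation; your sketch is precisely the kind of argument that reference contains. If anything, for this paper's purposes your write-up is more than is needed --- a one-line citation suffices here since the result is entirely classical and not the object of study --- but as a self-contained justification it is sound.
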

\begin{proof}
See~\cite[Theorem 2.21]{MR2361676}.
\end{proof}
We use $\{ \cdot \}_{\Gamma}$ to denote an average of the interior and exterior traces of a function. Then, applying the trace mappings yields to the single layer, double layer, adjoint double layer potentials and hypersingular boundary integral operator
\begin{align*}
 V: H^{-\frac{1}{2}}(\Gamma) & \rightarrow H^{\frac{1}{2}}(\Gamma) & V &:= \{\gamma_D \mathcal{V}\}_{\Gamma}, \\
 K: H^{\frac{1}{2}}(\Gamma) & \rightarrow H^{\frac{1}{2}}(\Gamma) & K &:= \{\gamma_D \mathcal{K}\}_{\Gamma}, \\
 K': H^{-\frac{1}{2}}(\Gamma) & \rightarrow H^{-\frac{1}{2}}(\Gamma) & K' &:= \{\gamma_N \mathcal{V}\}_{\Gamma}, \\
 W: H^{\frac{1}{2}}(\Gamma) & \rightarrow H^{-\frac{1}{2}}(\Gamma) & W &:= \{\gamma_N \mathcal{K}\}_{\Gamma}.
\end{align*}
For the solution $u$ of the problem~\eqref{eq:divided_problem}, we have the following boundary integral equations on $\Gamma$
\begin{align}
\label{eq:bound_integral_eq}
\begin{pmatrix}
\gamma_D^-u \\
\gamma_N^-u 
\end{pmatrix}
= C^-
\begin{pmatrix}
\gamma_D^-u \\
\gamma_N^-u 
\end{pmatrix}
&, & 
\begin{pmatrix}
\gamma_D^+u \\
\gamma_N^+u 
\end{pmatrix}
= C^+
\begin{pmatrix}
\gamma_D^+u \\
\gamma_N^+u 
\end{pmatrix} ,
\end{align}
where $C^{\pm}: H^{\frac{1}{2}}(\Gamma) \times  H^{-\frac{1}{2}}(\Gamma) \rightarrow H^{\frac{1}{2}}(\Gamma) \times  H^{-\frac{1}{2}}(\Gamma)$ denotes two Calder\'{o}n projectors defined as follows
\begin{equation*}
 C^{\pm} := 
\begin{pmatrix}
\tfrac{1}{2} Id \pm K & \mp V \\
\mp W & \tfrac{1}{2} Id \mp K'
\end{pmatrix}.
\end{equation*}
From the relation~\eqref{eq:bound_integral_eq} for external traces we can construct the following exterior Dirichlet-to-Neumann operator
\begin{equation}
 \label{eq:dtn_op}
  DtN^+ := -W+(\tfrac{1}{2} Id-K') \circ V^{-1} \circ (K-\tfrac{1}{2} Id).
\end{equation}
Obviously, it makes sense only if the inverse of the operator $V$ exists.

Using  Dirichlet-to-Neumann operator~\eqref{eq:dtn_op} we introduce a new variable $\lambda = \gamma_N^+u = \partial_n u^+$ as 
\begin{equation}
 \label{eq:lambda}
 \lambda := \left(V^{-1} \circ (K-\tfrac{1}{2} Id) \right) \gamma_D^- u.
\end{equation}

The classical symmetric coupling that satisfies the transmission conditions of~\eqref{eq:divided_problem} is as follow
\begin{center}
  \textit{Find $u \in H^1(\Omega^-)$ and $\lambda \in H^{-\frac{1}{2}}(\Gamma)$ such that for all $v \in H^1(\Omega^-)$ and $\zeta \in H^{-\frac{1}{2}}(\Gamma)$}
\begin{equation} 
 \label{eq:fem_bem_cont}
 \left\{
 \begin{array}{rcl}
  \int_{\Omega^-} \grad u \cdot \grad v ~dx +  \int_{\Omega^-} \varepsilon u  v ~ dx + \left<W u, \ v \right>_{\Gamma} - \left< \left(\tfrac{1}{2} Id - K'\right) \lambda, \ v \right>_{\Gamma} &=& \int_{\Omega^-} f v ~dx, \\[3mm] 
  \left<\left(\tfrac{1}{2} Id - K\right) u, \ \zeta \right>_{\Gamma} + \left<  V \lambda, \ \zeta \right>_{\Gamma} &=&0.
  \end{array}
  \right.
\end{equation}
\end{center}

\subsection{Well-posedness of the continuous problem}
\label{sec:cont_exis_uniq}
The following results are well known (see~\cite{MR965328} or~\cite{MR1299224}), but for reader's convenience we present them in the case of problem~\eqref{eq:divided_problem}.
Let us propose a more compact formulation of~\eqref{eq:fem_bem_cont}.
\begin{center}
  \textit{Find $u \in H^1(\Omega^-)$ and $\lambda \in H^{-\frac{1}{2}}(\Gamma)$ such that for all $v \in H^1(\Omega^-)$ and $\zeta \in H^{-\frac{1}{2}}(\Gamma)$}
\begin{equation} 
 \label{eq:fem_bem_cont_compact}
 A\left(\left(u, \lambda \right), \left(v, \zeta\right)\right) = \int_{\Omega^-} f v ~dx,
\end{equation}
\end{center}
where
\begin{align*}
 A\left(\left(w, \lambda \right), \left(v, \zeta\right)\right) :=& \int_{\Omega^-} \grad w \cdot \grad v~ dx +  \int_{\Omega^-} \varepsilon u  v ~dx - \tfrac{1}{2} \left< \lambda, \ v \right>_{\Gamma} + \tfrac{1}{2}\left< w, \ \zeta \right>_{\Gamma} \\
 &+ \left<W w, \ v \right>_{\Gamma} + \left< K' \lambda, \ v \right>_{\Gamma} - \left< K w, \ \zeta \right>_{\Gamma} + \left<  V \lambda, \ \zeta \right>_{\Gamma}.
\end{align*}
For simplicity we introduce the space $\mathbb{V} := H^1(\Omega^-) \times H^{-\frac12}(\Gamma)$ and the associated norm
 \begin{equation}
  \label{eq:norm_continuous}
  \left\|(v,\varphi) \right\|_{\mathbb{V}}^2 := \left\|v \right\|_{ H^1(\Omega^-)}^2 + \left\|\varphi \right\|_{ H^{-\frac{1}{2}}(\Gamma)}^2.
 \end{equation}

\begin{lemma}[Continuity]
  \label{l:continuity_continuous}
 There exists constant $\beta > 0$ such that for all $w,v \in H^1(\Omega^-)$ and $\lambda, \varphi \in H^{-\frac12}(\Gamma)$
 \begin{equation}
  \label{eq:continuity_continuous}
  \left|A\left(\left(w, \lambda \right), \left(v, \varphi\right)\right)\right| \leq \beta \left\|(w,\lambda) \right\|_{\mathbb{V}} \left\|(v,\varphi) \right\|_{\mathbb{V}}.
 \end{equation}
\end{lemma}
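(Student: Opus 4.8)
The plan is to estimate $A$ term by term, using three ingredients: the Cauchy--Schwarz inequality for the two volume integrals, the duality pairing relation~\eqref{eq:duality_pairing} for every boundary pairing $\left<\cdot,\cdot\right>_\Gamma$, and the trace theorem~\eqref{eq:trace_theorem} to convert the $H^{\frac12}(\Gamma)$-norms of the traces $\gamma_D^- w$ and $\gamma_D^- v$ into the $H^1(\Omega^-)$-norms appearing in $\left\|\cdot\right\|_{\mathbb V}$. In addition I would invoke the standard boundedness of the four boundary integral operators between their respective Sobolev spaces, namely $V\colon H^{-\frac12}(\Gamma)\to H^{\frac12}(\Gamma)$, $K\colon H^{\frac12}(\Gamma)\to H^{\frac12}(\Gamma)$, $K'\colon H^{-\frac12}(\Gamma)\to H^{-\frac12}(\Gamma)$ and $W\colon H^{\frac12}(\Gamma)\to H^{-\frac12}(\Gamma)$ (see, e.g.~\cite[Chapter~6]{MR2361676}), with operator norms I shall denote $\left\|V\right\|,\left\|K\right\|,\left\|K'\right\|,\left\|W\right\|$.

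For the two volume terms, Cauchy--Schwarz gives $\int_{\Omega^-}\grad w\cdot\grad v\,dx\le\left\|w\right\|_{H^1(\Omega^-)}\left\|v\right\|_{H^1(\Omega^-)}$ and $\int_{\Omega^-}\varepsilon w v\,dx\le\left\|\varepsilon\right\|_{L^\infty(\Omega)}\left\|w\right\|_{H^1(\Omega^-)}\left\|v\right\|_{H^1(\Omega^-)}$. For the two elementary boundary terms, the duality pairing relation followed by the trace theorem yields $\tfrac12\left|\left<\lambda,v\right>_\Gamma\right|\le\tfrac12 C_T\left\|\lambda\right\|_{H^{-\frac12}(\Gamma)}\left\|v\right\|_{H^1(\Omega^-)}$ and likewise $\tfrac12\left|\left<w,\varphi\right>_\Gamma\right|\le\tfrac12 C_T\left\|\varphi\right\|_{H^{-\frac12}(\Gamma)}\left\|w\right\|_{H^1(\Omega^-)}$.

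The four operator terms are handled the same way, inserting the mapping properties above. For example, $\left|\left<W w,v\right>_\Gamma\right|\le\left\|W w\right\|_{H^{-\frac12}(\Gamma)}\left\|v\right\|_{H^{\frac12}(\Gamma)}\le\left\|W\right\|C_T^2\left\|w\right\|_{H^1(\Omega^-)}\left\|v\right\|_{H^1(\Omega^-)}$, where one factor of $C_T$ controls $\left\|w\right\|_{H^{\frac12}(\Gamma)}$ entering through $W$ and the second controls $\left\|v\right\|_{H^{\frac12}(\Gamma)}$. Analogously $\left|\left<K'\lambda,v\right>_\Gamma\right|\le\left\|K'\right\|C_T\left\|\lambda\right\|_{H^{-\frac12}(\Gamma)}\left\|v\right\|_{H^1(\Omega^-)}$, $\left|\left<K w,\varphi\right>_\Gamma\right|\le\left\|K\right\|C_T\left\|\varphi\right\|_{H^{-\frac12}(\Gamma)}\left\|w\right\|_{H^1(\Omega^-)}$, and $\left|\left<V\lambda,\varphi\right>_\Gamma\right|\le\left\|V\right\|\left\|\lambda\right\|_{H^{-\frac12}(\Gamma)}\left\|\varphi\right\|_{H^{-\frac12}(\Gamma)}$ (the last needing no trace constant, as $V\lambda$ is already paired against $\varphi\in H^{-\frac12}(\Gamma)$).

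Summing the eight bounds, every resulting term is a product of one factor from $\{\left\|w\right\|_{H^1(\Omega^-)},\left\|\lambda\right\|_{H^{-\frac12}(\Gamma)}\}$ and one from $\{\left\|v\right\|_{H^1(\Omega^-)},\left\|\varphi\right\|_{H^{-\frac12}(\Gamma)}\}$, each of which is in turn bounded by $\left\|(w,\lambda)\right\|_{\mathbb V}$ and $\left\|(v,\varphi)\right\|_{\mathbb V}$ respectively by~\eqref{eq:norm_continuous}. Collecting the constants then gives~\eqref{eq:continuity_continuous} with, for instance, $\beta:=1+\left\|\varepsilon\right\|_{L^\infty(\Omega)}+C_T+\left\|W\right\|C_T^2+(\left\|K\right\|+\left\|K'\right\|)C_T+\left\|V\right\|$. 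There is no genuinely hard step here; the only points requiring care are the consistent use of the trace theorem to move all boundary norms back to the $H^1(\Omega^-)$-norm, and the reliance on the mapping properties of $V,K,K',W$, on which the estimate ultimately rests.
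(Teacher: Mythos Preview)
Your proof is correct and follows essentially the same approach as the paper: apply Cauchy--Schwarz to the volume terms, the duality pairing relation~\eqref{eq:duality_pairing} together with the continuity of the boundary integral operators $V,K,K',W$ to the boundary terms, and then the trace theorem~\eqref{eq:trace_theorem} to absorb the $H^{\frac12}(\Gamma)$-norms into the $H^1(\Omega^-)$-norm. You are in fact slightly more explicit than the paper, as you spell out each of the eight terms and give a concrete value for $\beta$.
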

\begin{proof}
 We use the Cauchy-Schwarz inequality, the relation~\eqref{eq:duality_pairing} and continuity of boundary operators (see~\cite[Section~6.2-6.5]{MR2361676}) to obtain
  \begin{align*}
  \left|A\left(\left(w, \lambda \right), \left(v, \varphi\right)\right)\right| \leq& \max\{1,\|\varepsilon\|_{L^\infty(\Omega)}\} \left\|w \right\|_{ H^1(\Omega^-)} \left\|v \right\|_{ H^1(\Omega^-)}\\
   & + \tfrac{1}{2} \left\|w \right\|_{ H^{\frac{1}{2}}(\Gamma)} \left\|\varphi \right\|_{ H^{-\frac{1}{2}}(\Gamma)} + \tfrac{1}{2} \left\|\lambda \right\|_{ H^{-\frac{1}{2}}(\Gamma)} \left\|v \right\|_{ H^{\frac{1}{2}}(\Gamma)} \\
   & + C_K \left\|w \right\|_{ H^{\frac{1}{2}}(\Gamma)} \left\|\varphi \right\|_{ H^{-\frac{1}{2}}(\Gamma)} + C_V \left\|\lambda \right\|_{ H^{-\frac{1}{2}}(\Gamma)} \left\|\varphi \right\|_{ H^{-\frac{1}{2}}(\Gamma)} \\
   & + C_{K'} \left\|\lambda \right\|_{ H^{-\frac{1}{2}}(\Gamma)} \left\|v \right\|_{ H^{\frac{1}{2}}(\Gamma)} + C_{W} \left\|w \right\|_{ H^{\frac{1}{2}}(\Gamma)} \left\|v \right\|_{ H^{\frac{1}{2}}(\Gamma)}.
 \end{align*}
 We finished by applying the trace inequality~\eqref{eq:trace_theorem} to terms including $\left\|\cdot \right\|_{H^{\frac{1}{2}}(\Gamma)}$ norm, 
$$
\left\|v \right\|_{ H^{\frac{1}{2}}(\Gamma)} +\left\|w \right\|_{ H^{\frac{1}{2}}(\Gamma)}  \leq C_T \left(\left\|v \right\|_{ H^1(\Omega^-) }+ \left\|w \right\|_{ H^1(\Omega^-)}\right).
$$
\end{proof}

\begin{lemma}[Coercivity]
  \label{l:coercivity_continuous}
 There exists constant $\alpha > 0$ such that for all $v \in H^1(\Omega^-)$ and $\varphi \in H^{-\frac12}(\Gamma)$
 \begin{equation}
  \label{eq:coercivity_continuous}
  A\left(\left(v, \varphi \right), \left(v, \varphi \right)\right) \geq \alpha  \left\|(v,\varphi) \right\|_{\mathbb{V}}^2.
 \end{equation}
\end{lemma}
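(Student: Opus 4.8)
The plan is to test the form on the diagonal, i.e. to evaluate $A\left(\left(v,\varphi\right),\left(v,\varphi\right)\right)$, and to exploit the adjoint/skew structure so that all the mixed boundary couplings cancel, leaving a sum of manifestly controllable terms. First I would note that the two half-terms $\tfrac12\langle v,\varphi\rangle_\Gamma-\tfrac12\langle\varphi,v\rangle_\Gamma$ cancel, since $\langle\cdot,\cdot\rangle_\Gamma$ is the symmetric extension of the $L^2(\Gamma)$ inner product. The double-layer couplings cancel as well: because $K'$ is the adjoint of $K$ with respect to this pairing, $\langle K'\varphi,v\rangle_\Gamma=\langle\varphi,Kv\rangle_\Gamma=\langle Kv,\varphi\rangle_\Gamma$, so that $\langle K'\varphi,v\rangle_\Gamma-\langle Kv,\varphi\rangle_\Gamma=0$. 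This reduces the quadratic form to the identity
\[
A\left(\left(v,\varphi\right),\left(v,\varphi\right)\right)=\|\nabla v\|_{L^2(\Omega^-)}^2+\int_{\Omega^-}\varepsilon\, v^2\,dx+\langle Wv,v\rangle_\Gamma+\langle V\varphi,\varphi\rangle_\Gamma .
\]

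Next I would bound each surviving piece from below. For the trace unknown I would invoke the $H^{-\frac12}(\Gamma)$-ellipticity of the single-layer operator in $\mathbb{R}^3$ (no scaling hypothesis is needed in three dimensions, since the kernel $G$ is positive), which yields $\langle V\varphi,\varphi\rangle_\Gamma\geq c_V\|\varphi\|_{H^{-\frac12}(\Gamma)}^2$. For the hypersingular term I would use only positivity, $\langle Wv,v\rangle_\Gamma\geq 0$, as $W$ is positive semidefinite with the constants in its kernel. It then remains to control the full $H^1(\Omega^-)$-norm of $v$ by $\|\nabla v\|_{L^2(\Omega^-)}^2+\int_{\Omega^-}\varepsilon\, v^2\,dx$; summing this with $c_V\|\varphi\|_{H^{-\frac12}(\Gamma)}^2$ gives the claim with $\alpha$ proportional to the minimum of the two lower-bound constants.

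The main obstacle is precisely this last step: the gradient term controls only the $H^1$-seminorm, and the boundary term $\langle Wv,v\rangle_\Gamma$ likewise vanishes on constants, so neither helps recover $\|v\|_{L^2(\Omega^-)}$. Coercivity therefore rests on the reaction term. If $\varepsilon\geq\varepsilon_0>0$ this is immediate, because $\|\nabla v\|_{L^2(\Omega^-)}^2+\int_{\Omega^-}\varepsilon\, v^2\,dx\geq\min\{1,\varepsilon_0\}\,\|v\|_{H^1(\Omega^-)}^2$. Under the weaker assumption that $\varepsilon$ is bounded below by a positive constant only on a subset $\omega\subset\Omega^-$ of positive measure, I would instead use a generalized Poincar\'e--Friedrichs inequality $\|v\|_{L^2(\Omega^-)}^2\leq C\bigl(\|\nabla v\|_{L^2(\Omega^-)}^2+\|v\|_{L^2(\omega)}^2\bigr)$, obtained by the standard Peetre--Tartar compactness argument (the seminorm and the $L^2(\omega)$-functional vanish simultaneously only for $v=0$), together with $\|v\|_{L^2(\omega)}^2\leq\varepsilon_0^{-1}\int_{\Omega^-}\varepsilon\, v^2\,dx$. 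Either way the interior reaction--diffusion form is $H^1(\Omega^-)$-coercive, which is the crux of the proof. I would emphasise that some positivity of $\varepsilon$ is genuinely needed: if $\varepsilon\equiv 0$ the form vanishes on the pair $(\mathrm{const},0)$, so strict coercivity fails and one must fall back on a G\aa rding inequality combined with the uniqueness of the continuous transmission problem.
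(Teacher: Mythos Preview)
For the regime $\varepsilon\ge\varepsilon_0>0$ your argument and the paper's coincide: both exploit the skew cancellation of the $\tfrac12\langle\cdot,\cdot\rangle_\Gamma$ and $K/K'$ cross terms, the $H^{-1/2}$-ellipticity of $V$, the semi-definiteness of $W$, and then close with $\|\nabla v\|_{L^2}^2+\int\varepsilon v^2\ge\min(1,\varepsilon_0)\|v\|_{H^1}^2$. Your additional variant with $\varepsilon$ positive only on a subset $\omega$ of positive measure is a nice strengthening that the paper does not state.

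The substantive divergence is the case $\varepsilon\equiv 0$. You stop there, arguing (correctly) that the diagonal form vanishes on $(\mathrm{const},0)$ and hence uniform coercivity over all of $H^1(\Omega^-)\times H^{-1/2}(\Gamma)$ cannot hold. The paper, by contrast, pushes on: it invokes a generalized Poincar\'e inequality $c_P\|v\|_{H^1}\le\|\nabla v\|_{L^2}+|g(v)|$ with the functional $g(v)=\langle V\varphi(v),\varphi(v)\rangle_\Gamma$, where $\varphi(v)$ is \emph{defined} through the second equation of the coupled system, and shows $g$ does not vanish on nonzero constants by appealing to the Calder\'on identities and injectivity of $V$. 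In effect the paper is exploiting the link between $v$ and $\varphi$ imposed by the BEM equation to recover control of the constant mode, rather than treating $\varphi$ as a free variable. Your counterexample $(\mathrm{const},0)$ highlights exactly this: the paper's $\varepsilon=0$ argument does not establish coercivity over the full product space as the lemma literally asserts, but rather over the graph of $v\mapsto\varphi(v)$; your reading of the statement is the sharper one, and your remark that one must otherwise fall back on a G\aa rding-plus-uniqueness argument is the standard remedy.
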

\begin{proof}
 Using coercivity of $V$ (see~\cite[Theorem~6.22]{MR2361676}) and coercivity of $W$ (see~\cite[Theorem~6.24]{MR2361676}) we obtain
  \begin{align*}
  A\left(\left(v, \varphi \right), \left(v, \varphi \right)\right) \geq & c_\epsilon \left\|v \right\|_{ H^1(\Omega^-)}^2 + \alpha_V \left\|\varphi \right\|_{ H^{-\frac{1}{2}}(\Gamma)}^2 + \alpha_W \left\|v - \bar v\right\|_{ H^{\frac{1}{2}}(\Gamma)}^2
 \end{align*}
where $\bar v$ denotes the average over $\Gamma$ of $v$ and $c_\varepsilon = \min(1,\varepsilon)$. 
This shows ~\eqref{eq:coercivity_continuous} when $\varepsilon>0$. For the case $\epsilon = 0$ we need a Poincar\'e inequality of the form 
\begin{equation}\label{eq:Pcare}
c_P \|v\|_{H^1(\Omega)} \leq \|\nabla v\|_{L^2(\Omega)} + |g(v)|
\end{equation}
where $g(v)$ is some functional that is non-zero for constant non-zero $v$ \cite[Lemma B.63]{MR2050138}. We claim that this holds with $g(v) = \left<  V \varphi(v), \varphi(v) \right>_{\Gamma}$ where $\varphi(v)$ is defined by the second equation of \eqref{eq:fem_bem_cont}. We immediately see that if this is true then there exists $\alpha>0$ such that
\begin{align*}
  A\left(\left(v, \varphi \right), \left(v, \varphi \right)\right) \geq & \alpha ( \left\|v \right\|_{ H^1(\Omega^-)}^2 
 \end{align*}
We need to show that for constant $v \ne 0$ there holds $\left<  V \varphi(v), \varphi(v) \right>_{\Gamma} \ne 0$.
Let $v\vert_\Gamma = u_0$ be a non-zero constant. Then we need to study
$$
\left<\left(\tfrac{1}{2} Id - K\right) u_0, \ \zeta \right>_{\Gamma} + \left<  V \varphi(v), \ \zeta \right>_{\Gamma} =0, \quad \forall \zeta \in H^{-\frac12}(\Gamma).
$$
We argue by contradiction. Assume that $\varphi(v) = 0$. Then $u_0$ is the trace of a solution to the homogeneous Neumann problem in $\Omega^-$.
However by the first line of the left relation of equation \eqref{eq:bound_integral_eq} there holds for all such traces 
\begin{equation}\label{eq:sing_pot}
\left<\left(\tfrac{1}{2} Id - K\right) u_0, \ \zeta \right>_{\Gamma} = \left< Id\, u_0, \ \zeta \right>_{\Gamma}, \quad \forall \zeta \in H^{-\frac12}(\Gamma).
\end{equation}
Hence the functional is defined by
\[
 \left<  V \varphi(v), \ \zeta \right>_{\Gamma}= - \left< Id\, u_0, \ \zeta \right>_{\Gamma}, \quad \forall \zeta \in H^{-\frac12}(\Gamma).
\]
However since the operator on the left hand side, defined by, $V$ is injective it follows that $ \varphi(v) \ne 0$, which leads to a contradiction. Hence $\left<  V \varphi(v), \varphi(v) \right>_{\Gamma} \ne 0$ for $v$ constant.
This concludes the proof.
\end{proof}

The existence and uniqueness of the solution of problem~\eqref{eq:fem_bem_cont_compact} is achieved by using the Lax-Milgram theorem.


\section{The discrete problem}
\label{sec:discrete_formulation}


We assume that $\Omega^-$ is a polyhedral domain. The boundary $\Gamma$ may be decomposed in a set of $n$ planar surfaces $\{\Gamma_j\}_{j=1}^n$. 
It will be convenient to use following broken Sobolev spaces over the polyhedral boundary~$\Gamma$. For $s>1$ define 
\begin{equation*}
\widetilde H^s(\Gamma)  :=  \left\{v \in H^1(\Gamma) : \ v|_{\Gamma_j} \in H^s(\Gamma_j), \, 1\leq j \leq n  \right\}.
\end{equation*}
When $s>1$ the
norm on  $\widetilde H^s(\Gamma)$,  is defined as the broken
norm over the faces of the polyhedral boundary $\Gamma$  $$\|v\|_{\widetilde H^{s}(\Gamma)} := \left(\sum_{j=1}^n \|v\|^2_{ H^{s}(\Gamma_j)}\right)^{\frac12}.$$

When $0 \leq s \leq 1$ the space $\widetilde H^s(\Gamma)$ coincides with the usual space $H^s(\Gamma)$ and their norms are the same (for more details see~\cite[Definition 4.1.48]{MR2743235}).

Let $\mathcal{T}_h$ be a triangulation of $\overline{\Omega}^-$ made of tetrahedrons. 
For each triangulation $\mathcal{T}_h$, $\mathcal{E}_h$ denotes the set of its facets. In addition, for each 
of element $K \in \mathcal{T}_h$, $h_K := \mbox{diam}(K)$, and 
$h := \max_{K \in \mathcal{T}_h} h_K$. Let $\mathcal{G}_i$, $i=1,2$ denote two different surface triangulation of the boundary $\Gamma$. For notation convenience we here assume that the trace mesh of $\mathcal{T}_h$ and the $\mathcal{G}_i$ all have the similar local mesh size.

 The following result will be useful in what follows.

\begin{lemma}[Trace inequality]
There exists $C_{max} > 0$, independent of $h_K$, such that for all $K \in \mathcal{T}_h$ and polynomial function $v$ in $K$ the following discrete trace inequality holds
\begin{align}
\label{eq:discrete_trace_inequality}
h_K^{\frac{1}{2}} \|v\|_{L^2(\partial K)} &\leq C_{max} \|{v}\|_{L^2(K)}.
\end{align}
\end{lemma}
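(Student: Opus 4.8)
The plan is to prove the discrete trace inequality
\begin{equation*}
h_K^{\frac{1}{2}} \|v\|_{L^2(\partial K)} \leq C_{max} \|v\|_{L^2(K)}
\end{equation*}
by a standard scaling argument that transfers the estimate from a fixed reference tetrahedron to an arbitrary element $K \in \mathcal{T}_h$. The key observation is that on a \emph{single} fixed reference element $\hat{K}$ the two quantities $\|\hat v\|_{L^2(\partial \hat K)}$ and $\|\hat v\|_{L^2(\hat K)}$ are both norms on the finite-dimensional space of polynomials of bounded degree restricted to $\hat K$ (here one uses that $\|\cdot\|_{L^2(\partial \hat K)}$ is genuinely a norm on polynomials, since a polynomial vanishing on all of $\partial \hat K$ must vanish identically). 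Because all norms on a finite-dimensional space are equivalent, there is a constant $\hat C$, depending only on $\hat K$ and the polynomial degree, with $\|\hat v\|_{L^2(\partial \hat K)} \leq \hat C \|\hat v\|_{L^2(\hat K)}$ for every polynomial $\hat v$ on $\hat K$.

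The steps I would carry out are as follows. First I fix the reference tetrahedron $\hat K$ and establish the reference inequality $\|\hat v\|_{L^2(\partial \hat K)} \leq \hat C \|\hat v\|_{L^2(\hat K)}$ via the norm-equivalence argument just described. Second, for a given $K \in \mathcal{T}_h$ I introduce the affine map $F_K : \hat K \to K$, $F_K(\hat x) = B_K \hat x + b_K$, and set $\hat v := v \circ F_K$, which is again a polynomial of the same degree on $\hat K$. Third, I track how the two $L^2$-norms transform under $F_K$ using the change of variables formula: the volume integral scales like $\|v\|_{L^2(K)}^2 = |\det B_K|\,\|\hat v\|_{L^2(\hat K)}^2$, while the surface integral scales like $\|v\|_{L^2(\partial K)}^2 \sim |\det B_K|\,\|B_K^{-1}\|\,\|\hat v\|_{L^2(\partial \hat K)}^2$ (the surface measure picks up the Jacobian of the restriction of $F_K$ to each face). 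Fourth, I combine the reference inequality with these scaling relations and use the shape-regularity estimates $\|B_K\| \lesssim h_K$ and $\|B_K^{-1}\| \lesssim h_K^{-1}$ to collect all geometric factors into a single power of $h_K$, which yields exactly the factor $h_K^{1/2}$ on the left.

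The main obstacle, and the only point requiring care, is controlling the surface-measure Jacobian uniformly in $K$: one must verify that the per-face scaling factors are bounded by $\|B_K^{-1}\|$ times $|\det B_K|$ in such a way that shape regularity of $\mathcal{T}_h$ produces a constant $C_{max}$ independent of $h_K$. This is where the assumption that the family of triangulations is shape-regular (quasi-uniform) is implicitly used, so that the ratios $h_K / \rho_K$ (with $\rho_K$ the inradius) are uniformly bounded and hence $\|B_K\|$, $\|B_K^{-1}\|$ and $|\det B_K|$ are all controlled by the appropriate powers of $h_K$ alone. Everything else is bookkeeping of constants; the substantive content is the finite-dimensional norm equivalence on $\hat K$ together with correct exponent tracking under affine scaling.
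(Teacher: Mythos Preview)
Your scaling argument is correct and is precisely the standard proof of this discrete trace inequality. The paper itself does not give a proof at all: it simply cites \cite[Lemma~1.46]{MR2882148} (Di Pietro--Ern), whose proof is exactly the reference-element plus affine-scaling argument you outline. So you have effectively reproduced the content behind the citation.

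One small inaccuracy worth flagging: your parenthetical claim that $\|\cdot\|_{L^2(\partial\hat K)}$ is a genuine norm on polynomials because ``a polynomial vanishing on all of $\partial\hat K$ must vanish identically'' is false once the degree is at least $d+1$ (for a tetrahedron, the product of the four face equations is a degree-$4$ polynomial vanishing on $\partial\hat K$). Fortunately you do not need this: for the direction $\|\hat v\|_{L^2(\partial\hat K)} \le \hat C\,\|\hat v\|_{L^2(\hat K)}$ it suffices that $\|\cdot\|_{L^2(\hat K)}$ is a norm on the finite-dimensional polynomial space, and then boundedness of the continuous functional $\hat v \mapsto \|\hat v\|_{L^2(\partial\hat K)}$ on its compact unit sphere gives the constant $\hat C$ (depending on the polynomial degree). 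With that correction the argument is complete.
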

\begin{proof}
See~\cite[Lemma 1.46]{MR2882148}
\end{proof}
 
 To discretise the problem~\eqref{eq:fem_bem_cont} over the triangulation one can choose either continuous or discontinuous finite elements. For simplicity of the analysis we choose the following spaces
 \begin{align*}
 V_{h}^{j} &:= \left\{{v}_h \in C^0\left(\Omega^-\right): \ {v}_h|_K \in \mathbb{P}_{j}\left(K \right) \ \forall \ {K \in \mathcal{T}_h}\right\}, \\
W_{h}^{k} &:= \left\{{w}_h \in C^0\left(\Gamma\right): \ {w}_h|_E \in \mathbb{P}_{k}\left(E \right) \ \forall \ E \in \mathcal{G}_1   \right\}, \\
 \Lambda_{h}^{l} &:= \left\{\lambda_h \in L^{2}\left(\Gamma\right): \ \lambda_h|_E \in \mathbb{P}_{l}\left(E \right) \ \forall \ E \in \mathcal{G}_1  \right\}, \\
M_{h}^{m} &:= \left\{ \widetilde{v}_h \in L^2(\Gamma):\ 
\ \widetilde{v}_h|_E \in \mathbb{P}_{m}\left(E \right) \ \forall \ E \in \mathcal{G}_2  \right\}. 
\end{align*} 

Let us denote $\mathcal{V}_h := V_{h}^{j} \times W_{h}^{k}$ and $v_h = \left(v_h^-, v_h^+\right) \in \mathcal{V}_h$.
Using the above spaces we propose the hybrid discrete formulation of the problem~\eqref{eq:fem_bem_cont}
\begin{center}
\textit{Find $\left(u_h, \lambda_h, \widetilde{u}_h\right) \in \mathcal{V}_h\times \Lambda_{h}^{l} \times M_{h}^{m}$ such that for all $\left(v_h, \varphi_h, \widetilde{v}_h\right) \in \mathcal{V}_h \times \Lambda_{h}^{l} \times M_{h}^{m}$}
\end{center}
\begin{equation} 
 \label{eq:fem_bem_disctrete}
 a_h\left(\left(u_h^-, \widetilde{u}_h \right), \left(v_h^-, \widetilde{v}_h\right)\right) + b_h\left( \left(u_h^+, \lambda_h, \widetilde{u}_h\right), \left(v_h^+, \varphi_h, \widetilde{v}_h\right)\right)  = \int_{\Omega^-} f v_h^- dx,
\end{equation}
where
\begin{align*}
a_h\left(\left(w_h, \widetilde{w}_h \right), \left(v_h, \widetilde{v}_h\right)\right) := & \int_{\Omega^-} \grad w_h : \grad v_h dx +  \int_{\Omega^-} \varepsilon  w_h v_h dx  \\
& - \left< \partial_n w_h, \ v_h - \widetilde{v}_h \right>_{\Gamma}   - \left< w_h - \widetilde{w}_h, \ \partial_n v_h \right>_{\Gamma} \\
&  + \tfrac{\tau}{h} \left< w_h - \widetilde{w}_h,\ v_h - \widetilde{v}_h \right>_{\Gamma}, \\
b_h\left( \left(w_h, \lambda_h, \widetilde{w}_h\right), \left(v_h, \varphi_h, \widetilde{v}_h\right)\right) := & \left<\left( \tfrac{1}{2}Id - K \right) w_h, \ \varphi_h \right>_{\Gamma} + \left<  V \lambda_h, \ \varphi_h \right>_{\Gamma} \\
&  + \left<W w_h, \ v_h \right>_{\Gamma}  - \left<\left( \tfrac{1}{2} Id -  K'\right)  \lambda_h, \ v_h \right>_{\Gamma}\\
&  +  \left< \lambda_h, \ v_h - \widetilde{v}_h \right>_{\Gamma} -  \left< w_h - \widetilde{w}_h, \ \varphi_h \right>_{\Gamma}\\
&  +  \tfrac{\tau}{h}  \left< w_h - \widetilde{w}_h,\ v_h - \widetilde{v}_h \right>_{\Gamma}.
\end{align*}
The stabilisation parameter $\tau >0$  has to be chosen appropriately. The formulation of bilinear form $a_h$ is well known for example from~\cite{Egger09aclass}. As we said before it is possible to use the discontinuous finite element method for example symmetric interior penalty hybrid discontinuous Galerkin method presented in~\cite{MR2727822}.  


\begin{remark}[Impedance boundary condition]
\label{rmk:robin}
The hybrid weakly imposed Dirichlet and Neumann boundary conditions is related to an  impedance boundary condition of the type
\begin{equation*}
 \widetilde{u} = -\gamma \frac{\partial u}{\partial n} + u,
\end{equation*}
with $\gamma = \frac{h}{\tau}$.
This can be seen considering terms associated with $\widetilde{v}_h$ in above definition of the bilinear forms.
\end{remark}

\begin{remark}[Relation to a standard Nitsche type method without hybridisation]
The trace variable $\widetilde u_h$ can be eliminated by replacing it by a linear combination of $u_h^+$ and $u_h^-$ with similarly the test function $\widetilde v_h$ replaced by the same linear combination of the test functions $v_h^+$ and $v_h^-$  (see \cite[Section 4.2]{BHL21}). The below analysis carries over verbatim to this case. If in addition $v_h^+$ and $v_h^-$ are chosen in the same spaces (with the same trace meshes) $u_h^-$ and $v_h^-$ can be substituted for $u_h^+$ and $v_h^+$ respectively, resulting in a (fully coupled) method with the unknowns $u_h^-$ and $\lambda_h$ only.
\end{remark}

\subsection{Symmetric formulation}
\label{sec:symmetric}

Despite using the symmetric Nitsche method, our whole system is not symmetric. This is a consequence of the lack of symmetry of the boundary element method with weak imposition. We can use the Steklov-Poincar\'{e} operator to eliminate the flux variable, so that the non-symmetric method above is transformed into a symmetric reduced system as we show below.

The following equations are associated with bilinear form $b_h$ from~\eqref{eq:fem_bem_disctrete} reads
\begin{align*}
 -\left<\left( \tfrac{1}{2} Id - K\right) w_h, \varphi_h\right>_{\Gamma} - \left<V \lambda_h, \varphi_h\right>_{\Gamma} &= -\left<w_h - \widetilde{w}_h, \varphi_h\right>_{\Gamma}, \\
 -\left<W w_h, v_h\right>_{\Gamma} + \left<\left(\tfrac{1}{2} Id - K'\right) \lambda_h, v_h\right>_{\Gamma}
  - \left<\lambda_h, v_h - \widetilde{v}_h\right>_{\Gamma} &= \tfrac{\tau}{h} \left<w_h - \widetilde{w}_h, v_h - \widetilde{v}_h\right>_{\Gamma}.
\end{align*}
Similar to the continuous formulation we use the Dirichlet-to-Neumann operator~\eqref{eq:dtn_op} to obtain
\begin{equation}
 \label{eq:lambda_h}
 \lambda_h := \left(V^{-1} \circ (K-\tfrac{1}{2} Id) \right) w_h + V^{-1}  \left( w_h - \widetilde{w}_h \right).
\end{equation}
Injecting this relation into the second equation leads to the formulation of the new symmetric bilinear form
\begin{align*}
\widehat{b}_h\left( \left(w_h, \widetilde{w}_h\right), \left(v_h, \widetilde{v}_h\right)\right) := & \left<W w_h, \ v_h \right>_{\Gamma} - \left<\left( \tfrac{1}{2} Id -  K'\right)  V^{-1}  (K-\tfrac{1}{2} Id)w_h, \ v_h \right>_{\Gamma} \\
& - \left<\left( \tfrac{1}{2} Id -  K'\right) V^{-1}  \left( w_h - \widetilde{w}_h \right), \ v_h \right>_{\Gamma} \\
& + \left< V^{-1}  (K-\tfrac{1}{2} Id)w_h, \ v_h - \widetilde{v}_h \right>_{\Gamma}\\
&  + \left< V^{-1}  \left( w_h - \widetilde{w}_h \right), \ v_h - \widetilde{v}_h \right>_{\Gamma} + \tfrac{\tau}{h}  \left< w_h - \widetilde{w}_h,\ v_h - \widetilde{v}_h \right>_{\Gamma}.
\end{align*}


\subsection{Well-posedness of the discrete problem}
\label{sec:exis_uniq}
Let us consider the following norms
\begin{align}
\nonumber
&&\left\|\left({w_h}, {\widetilde{w}_h}\right)\right\|_{\mathcal{F}_*}^2 := & \|{w_h}\|_{H^1(\Omega^-)}^2 + \tfrac{\tau}{h} \left\|{w_h} - {\widetilde{w}_h}\right\|_{L^2(\Gamma)}^2, \\
\label{eq:FEM_norm}
&&\left\|\left({w_h}, {\widetilde{w}_h}\right)\right\|_{\mathcal{F}}^2 := & \left\|\left({w_h}, {\widetilde{w}_h}\right)\right\|_{\mathcal{F}_*}^2 + h \left\| {\partial_n w_h}\right\|_{L^2(\Gamma)}^2, \\
\nonumber
&&\left\|\left({w_h}, \lambda_h, {\widetilde{w}_h}\right)\right\|_{\mathcal{B}_*}^2 := & \|w_h\|_{H^{\frac{1}{2}}(\Gamma)}^2 + \|\lambda_h\|_{H^{-\frac{1}{2}}(\Gamma)}^2 + \tfrac{\tau}{h} \left\|{w_h} - {\widetilde{w}_h}\right\|_{L^2(\Gamma)}^2, \\
\label{eq:BEM_norm}
&&\left\|\left({w_h}, \lambda_h, {\widetilde{w}_h} \right)\right\|_{\mathcal{B}}^2 := & \left\|\left({w_h}, \lambda_h, {\widetilde{w}_h}\right)\right\|_{\mathcal{B}_*}^2 + h \left\| \lambda_h\right\|_{L^2(\Gamma)}^2. 
\end{align}
\begin{lemma}[Equivalence of the norms]
 For all $\left({w_h}, \lambda_h, {\widetilde{w}_h}\right) \in  \mathcal{V}_h \times \Lambda_{h}^{l} \times M_{h}^{m}$ there exist positive constants $C_{\mathcal{F}}, C_{\mathcal{B}}$ such that 
 \begin{align}
 \label{eq:fem_norm_equivalence}
  \left\|\left({w_h}^-, {\widetilde{w}_h}\right)\right\|_{\mathcal{F}_*} \leq \left\|\left({w_h}^-, {\widetilde{w}_h}\right)\right\|_{\mathcal{F}} &\leq C_{\mathcal{F}} \left\|\left({w_h}^-, {\widetilde{w}_h}\right)\right\|_{\mathcal{F}_*},
  \\
   \label{eq:bem_norm_equivalence}
    \left\|\left({w_h}^+, \lambda_h, {\widetilde{w}_h}\right)\right\|_{\mathcal{B}_*} \leq \left\|\left({w_h}^+, \lambda_h, {\widetilde{w}_h}\right)\right\|_{\mathcal{B}} &\leq C_{\mathcal{B}} \left\|\left({w_h}^+, \lambda_h, {\widetilde{w}_h}\right)\right\|_{\mathcal{B}_*}.
 \end{align}
\end{lemma}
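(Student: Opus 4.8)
The plan is to prove each double inequality by observing that the left-hand (lower) bounds are trivial and that the right-hand (upper) bounds each reduce to a single discrete inverse-type estimate on $\Gamma$. Indeed, by the very definitions \eqref{eq:FEM_norm} and \eqref{eq:BEM_norm} we have $\left\|(w_h^-,\widetilde w_h)\right\|_{\mathcal{F}}^2 = \left\|(w_h^-,\widetilde w_h)\right\|_{\mathcal{F}_*}^2 + h\|\partial_n w_h^-\|_{L^2(\Gamma)}^2$ and $\left\|(w_h^+,\lambda_h,\widetilde w_h)\right\|_{\mathcal{B}}^2 = \left\|(w_h^+,\lambda_h,\widetilde w_h)\right\|_{\mathcal{B}_*}^2 + h\|\lambda_h\|_{L^2(\Gamma)}^2$; since the added terms are nonnegative, the left inequalities in \eqref{eq:fem_norm_equivalence} and \eqref{eq:bem_norm_equivalence} follow at once. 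It therefore remains to bound each of the two extra terms by the corresponding starred norm.

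For the upper bound in \eqref{eq:fem_norm_equivalence} I would control $h\|\partial_n w_h^-\|_{L^2(\Gamma)}^2$ element by element. For each $K\in\mathcal{T}_h$ possessing a face on $\Gamma$, applying the discrete trace inequality \eqref{eq:discrete_trace_inequality} to the (polynomial) components of $\grad w_h^-$ gives $h_K\|\partial_n w_h^-\|_{L^2(\partial K\cap\Gamma)}^2 \le C_{max}^2\,\|\grad w_h^-\|_{L^2(K)}^2$. Summing over the boundary elements and invoking the assumed local equivalence of the trace mesh size with $h$, I obtain $h\|\partial_n w_h^-\|_{L^2(\Gamma)}^2 \le C\,\|\grad w_h^-\|_{L^2(\Omega^-)}^2 \le C\,\|w_h^-\|_{H^1(\Omega^-)}^2 \le C\,\left\|(w_h^-,\widetilde w_h)\right\|_{\mathcal{F}_*}^2$, which yields \eqref{eq:fem_norm_equivalence} with $C_{\mathcal{F}}^2 = 1 + C$.

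For the upper bound in \eqref{eq:bem_norm_equivalence} I need to absorb $h\|\lambda_h\|_{L^2(\Gamma)}^2$ into $\|\lambda_h\|_{H^{-\frac12}(\Gamma)}^2$, and hence into $\left\|(w_h^+,\lambda_h,\widetilde w_h)\right\|_{\mathcal{B}_*}^2$. This is where a genuine negative-order inverse estimate on the surface mesh $\mathcal{G}_1$ is required, namely
\[
\|\lambda_h\|_{L^2(\Gamma)} \le C\, h^{-\frac12}\,\|\lambda_h\|_{H^{-\frac12}(\Gamma)}, \qquad \lambda_h\in\Lambda_{h}^{l},
\]
a standard inverse inequality for boundary element spaces on quasi-uniform meshes (see, e.g., \cite{MR2743235}). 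Squaring and multiplying by $h$ gives $h\|\lambda_h\|_{L^2(\Gamma)}^2 \le C^2\|\lambda_h\|_{H^{-\frac12}(\Gamma)}^2 \le C^2\left\|(w_h^+,\lambda_h,\widetilde w_h)\right\|_{\mathcal{B}_*}^2$, so that \eqref{eq:bem_norm_equivalence} holds with $C_{\mathcal{B}}^2 = 1 + C^2$.

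The routine parts are the two elementwise summations and the trivial lower bounds. The main obstacle is the $H^{-\frac12}$-to-$L^2$ inverse inequality underpinning the $\mathcal{B}$ bound: unlike the $\mathcal{F}$ bound, which follows directly from the already stated discrete trace inequality \eqref{eq:discrete_trace_inequality}, this fractional estimate has a constant depending on the shape regularity and quasi-uniformity of $\mathcal{G}_1$, and the delicate point is to check that the hypothesis of a ``similar local mesh size'' suffices to apply it in the present setting rather than requiring global quasi-uniformity.
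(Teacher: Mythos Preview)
Your proof is correct and follows exactly the route taken in the paper: the lower bounds are immediate, the $\mathcal{F}$ upper bound comes from the discrete trace inequality~\eqref{eq:discrete_trace_inequality} applied to $\partial_n w_h^-$, and the $\mathcal{B}$ upper bound comes from the negative-order inverse inequality $h^{1/2}\|\lambda_h\|_{L^2(\Gamma)}\le C\|\lambda_h\|_{H^{-1/2}(\Gamma)}$ (cf.\ \cite[Remark~4.4.4]{MR2743235}). Your additional remark on the quasi-uniformity hypothesis needed for the latter inequality is a fair observation but does not change the argument.
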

\begin{proof}
 For~\eqref{eq:fem_norm_equivalence} we use the trace inequality~\eqref{eq:discrete_trace_inequality} and for~\eqref{eq:bem_norm_equivalence} we use the inverse inequality $ h^{\frac12} \left\| \lambda_h\right\|_{L^2(\Gamma)} \leq C \left\|\lambda_h\right\|_{H^{-\frac12}(\Gamma)}$ (see~\cite[Remark 4.4.4]{MR2743235}).
\end{proof}

\begin{lemma}[Continuity]
  \label{l:continuity_discrete}
 There exists positive constant $\beta_{\mathcal{F}}$ such that for all $w,v \in H^{\frac32+\delta}(\Omega^-)$, for $\delta > 0$, and $\widetilde{w}, \widetilde{v} \in L^2(\Gamma)$
 \begin{equation}
  \label{eq:continuity_discrete_fem}
  \left|a_h\left(\left(w, \widetilde{w} \right), \left(v, \widetilde{v}\right)\right)\right| \leq \beta_{\mathcal{F}} \left\|(w,\widetilde{w}) \right\|_{\mathcal{F}} \left\|(v,\widetilde{v}) \right\|_{\mathcal{F}}.
 \end{equation}
 There exists  positive constant $\beta_{\mathcal{B}}$ such that for all $w,v \in H^{\frac{1}{2}}(\Gamma)$, $\lambda, \varphi \in L^{2}(\Gamma)$ and $\widetilde{w}, \widetilde{v} \in L^{2}(\Gamma)$
 \begin{equation}
  \label{eq:continuity_discrete_bem}
  \left|b_h\left( \left(w, \lambda, \widetilde{w}\right), \left(v, \varphi, \widetilde{v}\right)\right)\right| \leq \beta_{\mathcal{B}} \left\|(w,\lambda, \widetilde{w}) \right\|_{\mathcal{B}} \left\|(v,\varphi, \widetilde{v}) \right\|_{\mathcal{B}}.
 \end{equation}
\end{lemma}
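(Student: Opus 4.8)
The plan is to estimate the two forms separately, treating each term either as a duality pairing on $H^{-\frac12}(\Gamma)\times H^{\frac12}(\Gamma)$ or as a scaled $L^2(\Gamma)$ inner product, and to observe that the mesh-weighted pieces of the norms \eqref{eq:FEM_norm} and \eqref{eq:BEM_norm} are tailored precisely so that each product closes with an $h$-independent constant.

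For $a_h$ I would first bound the two volume contributions by Cauchy--Schwarz, obtaining $\max\{1,\|\varepsilon\|_{L^\infty(\Omega)}\}\,\|w\|_{H^1(\Omega^-)}\|v\|_{H^1(\Omega^-)}$, which is dominated by the $H^1(\Omega^-)$ parts of the $\mathcal{F}$-norms. The remaining three terms are genuine $L^2(\Gamma)$ products; this is exactly why the hypothesis $w,v\in H^{\frac32+\delta}(\Omega^-)$ is imposed, so that $\partial_n w,\partial_n v$ are well-defined elements of $L^2(\Gamma)$. For the consistency term I split the mesh scaling as $|\left<\partial_n w,\,v-\widetilde v\right>_\Gamma|\le (h^{\frac12}\|\partial_n w\|_{L^2(\Gamma)})(h^{-\frac12}\|v-\widetilde v\|_{L^2(\Gamma)})$, where the first factor is controlled by the $h\|\partial_n w\|_{L^2(\Gamma)}^2$ term of \eqref{eq:FEM_norm} and the second by $\tau^{-\frac12}$ times the penalty term $\tfrac{\tau}{h}\|v-\widetilde v\|^2_{L^2(\Gamma)}$; the adjoint consistency term is handled symmetrically and the penalty term directly by Cauchy--Schwarz. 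Collecting the four bounds gives \eqref{eq:continuity_discrete_fem} with $\beta_{\mathcal{F}}$ depending on $\|\varepsilon\|_{L^\infty(\Omega)}$ and $\tau$.

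For $b_h$ the four boundary-integral-operator terms are treated as duality pairings: using the continuity of $K,V,K',W$ on the appropriate trace spaces (see \cite[Section~6.2--6.5]{MR2361676}) together with the pairing inequality \eqref{eq:duality_pairing}, each is bounded by a product of the $H^{\frac12}(\Gamma)$- and $H^{-\frac12}(\Gamma)$-norms of its arguments, hence by the corresponding pieces of the $\mathcal{B}$-norms \eqref{eq:BEM_norm}. The two terms carrying the trace variable, $\left<\lambda,\,v-\widetilde v\right>_\Gamma$ and $\left<w-\widetilde w,\,\varphi\right>_\Gamma$, cannot be read as duality pairings, since $\widetilde v,\widetilde w$ lie only in $L^2(\Gamma)$; instead I estimate them as $L^2(\Gamma)$ products with split scaling, for instance $(h^{\frac12}\|\lambda\|_{L^2(\Gamma)})(h^{-\frac12}\|v-\widetilde v\|_{L^2(\Gamma)})$, so that the $h\|\lambda\|_{L^2(\Gamma)}^2$ and penalty pieces of \eqref{eq:BEM_norm} supply the two factors. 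The penalty term is again immediate, and summing yields \eqref{eq:continuity_discrete_bem}.

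The only genuinely delicate point is the bookkeeping of the mesh scaling: for each term one must decide whether to invoke the mesh-independent operator/duality bound in the fractional norms or the mesh-dependent $L^2(\Gamma)$ bound paired against an $h$-weighted norm component. The terms involving $\widetilde w,\widetilde v$ force the latter, and it is precisely the inclusion of $h\|\partial_n w\|^2_{L^2(\Gamma)}$ in $\mathcal{F}$ and $h\|\lambda\|^2_{L^2(\Gamma)}$ in $\mathcal{B}$ that lets those products close uniformly in $h$. Note that no inverse or discrete trace inequality is needed for the continuity estimate itself, because the full norms already carry the correct powers of $h$; such inequalities enter only in the companion norm-equivalence lemma.
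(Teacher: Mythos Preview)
Your proposal is correct and follows essentially the same route as the paper: the paper's proof is simply ``We use Cauchy--Schwarz inequality to obtain~\eqref{eq:continuity_discrete_fem}. In the case of equation~\eqref{eq:continuity_discrete_bem}, we use Cauchy--Schwarz inequality and Lemma~\ref{l:continuity_continuous},'' so your argument is precisely the unpacking of this, with the reference to Lemma~\ref{l:continuity_continuous} replaced by direct appeals to the continuity of $V,K,K',W$ (which is exactly what that lemma's proof uses). Your explicit $h^{\frac12}\cdot h^{-\frac12}$ splitting for the hybrid terms is the bookkeeping the paper leaves implicit.
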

\begin{proof}
 We use Cauchy-Schwarz inequality to obtain~\eqref{eq:continuity_discrete_fem}. In the case of equation~\eqref{eq:continuity_discrete_bem}, we use Cauchy-Schwarz inequality and Lemma~\ref{l:continuity_continuous}.
\end{proof}
To show the well-posedness of~\eqref{eq:fem_bem_disctrete} we need the ellipticity of the bilinear forms $a_h$ and $b_h$
\begin{lemma}[Coercivity]
  \label{l:coercivity_discrete}
 Assume that  positive constant $\tau$ is large enough. 
 Then, there exists  positive constant $\alpha$ such that for all $\left({w_h}, \lambda_h, {\widetilde{w}_h}\right) \in \mathcal{V}_{h} \times \Lambda_h^l \times M_{h}^{m}$
 \begin{align}
  \label{eq:coercivity_discrete}
  a_h\left(\left(w_h^-, \widetilde{w}_h \right), \left(w_h^-, \widetilde{w}_h \right)\right) 
   + & b_h \left( \left(w_h^+, \lambda_h, {\widetilde{w}_h}\right), \left({w_h}^+, \lambda_h, {\widetilde{w}_h}\right)\right) \\ \nonumber & \geq \alpha \left( \left\|(w_h,\widetilde{w}_h) \right\|_{\mathcal{F}}^2 + \left\|\left({w_h}, \lambda_h, {\widetilde{w}_h}\right) \right\|_{\mathcal{B}}^2 \right).
 \end{align}
\end{lemma}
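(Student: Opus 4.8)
The plan is to test the bilinear forms against the diagonal arguments $(w_h^-,\widetilde w_h)$ and $(w_h^+,\lambda_h,\widetilde w_h)$ and to first exploit two algebraic cancellations. In $b_h$ the skew pair $\langle \lambda_h, w_h^+-\widetilde w_h\rangle_\Gamma-\langle w_h^+-\widetilde w_h,\lambda_h\rangle_\Gamma$ vanishes, and since $K'$ is the adjoint of $K$ with respect to $\langle\cdot,\cdot\rangle_\Gamma$ (and this pairing is symmetric on the discrete $L^2$-functions), the terms $\langle(\tfrac12 Id-K)w_h^+,\lambda_h\rangle_\Gamma$ and $-\langle(\tfrac12 Id-K')\lambda_h,w_h^+\rangle_\Gamma$ cancel as well. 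Hence the diagonal collapses to
\begin{align*}
a_h\!\left(\left(w_h^-,\widetilde w_h\right),\left(w_h^-,\widetilde w_h\right)\right)+ b_h\!\left(\left(w_h^+,\lambda_h,\widetilde w_h\right),\left(w_h^+,\lambda_h,\widetilde w_h\right)\right) ={}& \|\grad w_h^-\|_{L^2(\Omega^-)}^2 + \int_{\Omega^-}\varepsilon |w_h^-|^2\,dx \\
& - 2\langle \partial_n w_h^-, w_h^- - \widetilde w_h\rangle_\Gamma + \tfrac{\tau}{h}\|w_h^- - \widetilde w_h\|_{L^2(\Gamma)}^2 \\
& + \langle V\lambda_h,\lambda_h\rangle_\Gamma + \langle W w_h^+, w_h^+\rangle_\Gamma + \tfrac{\tau}{h}\|w_h^+ - \widetilde w_h\|_{L^2(\Gamma)}^2.
\end{align*}

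Next I would absorb the interior consistency term. Splitting it as a product of $h^{1/2}\partial_n w_h^-$ and $h^{-1/2}(w_h^--\widetilde w_h)$, Young's inequality with a free parameter $\eta$ together with the discrete trace inequality~\eqref{eq:discrete_trace_inequality} gives $2|\langle \partial_n w_h^-, w_h^- - \widetilde w_h\rangle_\Gamma| \le \eta\, C_{max}^2\|\grad w_h^-\|_{L^2(\Omega^-)}^2 + \tfrac{1}{\eta h}\|w_h^- - \widetilde w_h\|_{L^2(\Gamma)}^2$. Choosing $\eta<C_{max}^{-2}$ retains a positive share of $\|\grad w_h^-\|_{L^2(\Omega^-)}^2$, and then taking $\tau>1/\eta$ leaves a positive multiple of the penalty $\tfrac{\tau}{h}\|w_h^--\widetilde w_h\|_{L^2(\Gamma)}^2$; this is precisely where the hypothesis that $\tau$ be large enough enters. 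Combined with the ellipticity of $V$ and $W$ used in the proof of Lemma~\ref{l:coercivity_continuous}, this produces a lower bound by $\|\grad w_h^-\|_{L^2(\Omega^-)}^2+\int_{\Omega^-}\varepsilon|w_h^-|^2\,dx+\tfrac{\tau}{h}\|w_h^--\widetilde w_h\|_{L^2(\Gamma)}^2+\alpha_V\|\lambda_h\|_{H^{-1/2}(\Gamma)}^2+\alpha_W|w_h^+|_{H^{1/2}(\Gamma)}^2+\tfrac{\tau}{h}\|w_h^+-\widetilde w_h\|_{L^2(\Gamma)}^2$, where only the $H^{1/2}$-\emph{seminorm} of $w_h^+$ is produced by $W$.

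The crux of the argument, and the step I expect to be the main obstacle, is that $W$ annihilates constants, so I still have to recover $\|w_h^+\|_{L^2(\Gamma)}$ in order to upgrade the seminorm to the full $\mathcal B_*$-norm. The chain of penalties supplies the link: by the triangle inequality $\|w_h^+-w_h^-\|_{L^2(\Gamma)}\le \|w_h^+-\widetilde w_h\|_{L^2(\Gamma)}+\|\widetilde w_h-w_h^-\|_{L^2(\Gamma)}$, so the two penalty terms jointly bound $\|w_h^+-w_h^-\|_{L^2(\Gamma)}$, while the trace inequality bounds $\|w_h^-\|_{L^2(\Gamma)}$ by $\|w_h^-\|_{H^1(\Omega^-)}$. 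It then remains to control $\|w_h^-\|_{H^1(\Omega^-)}$ itself: when $\varepsilon$ is bounded away from zero this is immediate from $\int_{\Omega^-}\varepsilon|w_h^-|^2\,dx+\|\grad w_h^-\|_{L^2(\Omega^-)}^2$, whereas for $\varepsilon=0$ the global constant mode must be removed through the Poincar\'e-type argument of Lemma~\ref{l:coercivity_continuous}, transported to the discrete setting; this is the genuinely delicate point, since the constant lies simultaneously in the kernel of $W$ and of the interior gradient.

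Finally, collecting $|w_h^+|_{H^{1/2}(\Gamma)}^2+\|w_h^+\|_{L^2(\Gamma)}^2\gtrsim\|w_h^+\|_{H^{1/2}(\Gamma)}^2$ together with the interior bounds yields coercivity in the starred norms $\mathcal F_*$ and $\mathcal B_*$. The norm equivalences~\eqref{eq:fem_norm_equivalence} and~\eqref{eq:bem_norm_equivalence} (the discrete trace inequality absorbing $h\|\partial_n w_h^-\|_{L^2(\Gamma)}^2$ and the inverse inequality absorbing $h\|\lambda_h\|_{L^2(\Gamma)}^2$) then promote this to the full $\mathcal F$ and $\mathcal B$ norms appearing in~\eqref{eq:coercivity_discrete}, with the final constant $\alpha$ depending on $\eta$, $\tau$, $C_{max}$, $\alpha_V$, $\alpha_W$ and the Poincar\'e constant.
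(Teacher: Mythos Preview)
Your proposal is correct and follows essentially the same route as the paper: expand the diagonal, use the discrete trace inequality with Young's inequality to absorb the Nitsche consistency term (this fixes the lower threshold on $\tau$), invoke the ellipticity of $V$ and the semi-ellipticity of $W$ for the boundary part, recover the missing $L^2(\Gamma)$-mass of $w_h^+$ through the two penalty terms and the trace of $w_h^-$, handle the $\varepsilon=0$ case by the Poincar\'e argument of Lemma~\ref{l:coercivity_continuous}, and finish with the norm equivalences~\eqref{eq:fem_norm_equivalence}--\eqref{eq:bem_norm_equivalence}. The only cosmetic differences are that you spell out the algebraic cancellations in $b_h$ explicitly (the paper hides them behind a reference to Lemma~\ref{l:coercivity_continuous}) and you keep a free Young parameter $\eta$ where the paper simply takes $\tau>2C_{max}^2$.
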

\begin{proof}
 Let us start with bilinear form $a_h$. First we assume that $\varepsilon\ge \varepsilon_{min}>0$
 \begin{align*}
  a_h\left(\left(w_h^-, \widetilde{w}_h \right), \left(w_h^-, \widetilde{w}_h \right)\right) = & |w_h^-|^2_{H^1(\Omega^-)} + 
  \|\varepsilon^{1/2} w_h^-\|_{\Omega^-}^2 - 2 \left<\partial_n w_h^-, w_h^- - \widetilde{w}_h\right>_{\Gamma} \\
  &  + \tfrac{\tau}{h} \left\| w_h^- - \widetilde{w}_h\right\|_{L^2(\Gamma)}^2.
 \end{align*}
 Using Cauchy-Schwarz and trace inequalities~\eqref{eq:discrete_trace_inequality}, followed by Young's inequality, we arrive at
 \begin{align*}
   a_h\left(\left(w_h^-, \widetilde{w}_h \right), \left(w_h^-, \widetilde{w}_h \right)\right) \geq & |w_h^-|^2_{H^1(\Omega^-)} + \varepsilon_{\min}  \|w_h^-\|_{\Omega^-}^2 + \tfrac{\tau}{h} \left\| w_h^- - \widetilde{w}_h\right\|_{L^2(\Gamma)}^2 \\
   & - 2 \left\|\partial_n w_h^- \right\|_{L^2(\Gamma)} \left\|w_h^- - \widetilde{w}_h\right\|_{L^2(\Gamma)} \\
   \geq& |w_h^-|^2_{H^1(\Omega^-)} + \varepsilon_{\min}  \|w_h^-\|_{\Omega^-}^2  + \tfrac{\tau}{h} \left\| w_h^- - \widetilde{w}_h\right\|_{L^2(\Gamma)}^2  \\
   & - 2  \left| w_h^- \right|_{H^1(\Omega^-)} \left( C_{max}h^{-\frac{1}{2}} \left\|w_h^- - \widetilde{w}_h\right\|_{L^2(\Gamma)} \right) \\
   \geq & \tfrac{1}{2} |w_h^-|^2_{H^1(\Omega^-)} + \varepsilon_{\min}   \|w_h^-\|_{\Omega^-}^2  + \tfrac{\tau - 2 C_{max}^2}{h} \left\| w_h^- - \widetilde{w}_h\right\|_{L^2(\Gamma)}^2.
 \end{align*}
We finish by applying the equivalence of the norms~\eqref{eq:fem_norm_equivalence} under the assumption  that $\tau > 2 C_{max}^2$.

In the case of bilinear form $b_h$, by using the results from Lemma~\ref{l:coercivity_continuous} we obtain, with $\bar w_h = |\Gamma|^{-1} \int_\Gamma w^+_h ~\mbox{d}s$,
 \begin{align*}
  b_h\left( \left(w_h^+, \lambda_h, {\widetilde{w}_h}\right), \left({w_h}^+, \lambda_h, {\widetilde{w}_h}\right)\right) \geq & \alpha_V \left\|\lambda_h \right\|_{ H^{-\frac{1}{2}}(\Gamma)}^2 + \alpha_W \left\|w_h^+ - \bar w_h\right\|_{ H^{\frac{1}{2}}(\Gamma)}^2\\
  & + \tfrac{\tau}{h} \left\| w_h^+ - \widetilde{w}_h\right\|_{L^2(\Gamma)}^2.
 \end{align*}
 Observe that when $\varepsilon>0$ we may bound
 \begin{align*}
\|w_h^+\|_{ H^{\frac{1}{2}}(\Gamma)} &\leq \|w_h^+ - \bar w_h\|_{ H^{\frac{1}{2}}(\Gamma)} + \|\bar w_h\|_{ L^2(\Gamma)} \\
&\leq \|w_h^+ - \bar w_h\|_{ H^{\frac{1}{2}}(\Gamma)} + \|w_h^+ - \widetilde w_h\|_{ L^2(\Gamma)} + \|w_h^- - \widetilde w_h\|_{ L^2(\Gamma)}+\|w_h^-\|_{ L^2(\Gamma)}\\
&\leq \|w_h^+ - \bar w_h\|_{ H^{\frac{1}{2}}(\Gamma)} + \|w_h^+ - \widetilde w_h\|_{ L^2(\Gamma)} + \|w_h^- - \widetilde w_h\|_{L^2(\Gamma)} + C \|w_h^-\|_{H^1(\Omega^-)}
 \end{align*}
 where we applied the trace inequality \eqref{eq:trace_theorem} in the last step. The right hand side is controlled by the lower bounds on $a_h$ and $b_h$ above. 
 Once again, we finish by applying the equivalence of the norms~\eqref{eq:bem_norm_equivalence}.

In case $\varepsilon=0$ we need to show that a Poincar\'e inequality holds, similar to \eqref{eq:Pcare}, this time on the form
\begin{multline}
c_P \|w_h^-\|_{H^1(\Omega^-)} \leq |w_h^-|_{H^1(\Omega^-)} + \tfrac{1}{h} \left\| w_h^- - \widetilde{w}_h\right\|_{L^2(\Gamma)}+\tfrac{1}{h} \left\| w_h^+ - \widetilde{w}_h\right\|_{L^2(\Gamma)}\\ + \left\|\lambda_h \right\|_{ H^{-\frac{1}{2}}(\Gamma)}  + \left\|w_h^+ - \bar w_h\right\|_{ H^{\frac{1}{2}}(\Gamma)}
\end{multline}
 To this end, since coercivity holds up to a constant, we may assume that $w^-_h = \tilde w_h = w^+_h = \bar w_h$ and proceed verbatim as in the continuous case, since in that case the continuous and discrete expressions corresponding to \eqref{eq:sing_pot} are the same.
\end{proof}

The existence and uniqueness of the solution of problem~\eqref{eq:fem_bem_disctrete} is achieved by using the Lax-Milgram theorem. In addition, the  proposed method is consistent as the following result shows.

\begin{lemma}[Consistency]
 \label{l:consistency}
 Let $\delta > 0$, $u^- \in H^{\frac32+\delta}(\Omega^-)$, $u^+ \in H^{\frac{1}{2}}(\Gamma)$ and $\partial_n u^+ = \lambda \in L^{2}(\Gamma)$ be the solution of problem~\eqref{eq:divided_problem} and $\widetilde{u} = u^- = u^+$ on $\Gamma$. If $\left(u_h, \lambda_h, \widetilde{u}_h\right) \in \mathcal{V}_h\times \Lambda_{h}^{l} \times M_{h}^{m}$ solves~\eqref{eq:fem_bem_disctrete} then, for all $\left(v_h, \varphi_h, \widetilde{v}_h\right) \in \mathcal{V}_h \times \Lambda_{h}^{l} \times M_{h}^{m}$ the following holds
 \begin{equation*}
  a_h\left(\left(u^--u_h^-, \widetilde{u} - \widetilde{u}_h \right), \left(v_h^-, \widetilde{v}_h\right)\right) + b_h\left( \left(u^+-u_h^+, \lambda - \lambda_h, \widetilde{u} -\widetilde{u}_h\right), \left(v_h^+, \varphi_h, \widetilde{v}_h\right)\right)  =0.
 \end{equation*}
\end{lemma}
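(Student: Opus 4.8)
The plan is to prove the stated Galerkin orthogonality by first showing that the \emph{exact} solution $(u^-,\lambda,\widetilde u)$, inserted into the first slot of $a_h$ and $b_h$, reproduces exactly the right-hand side $\int_{\Omega^-} f v_h^- \, dx$ of the discrete problem~\eqref{eq:fem_bem_disctrete}, for every discrete test triple $(v_h,\varphi_h,\widetilde v_h)$. Once this is established, the claim follows immediately: subtracting the discrete equation~\eqref{eq:fem_bem_disctrete} satisfied by $(u_h,\lambda_h,\widetilde u_h)$ and using that $a_h$ and $b_h$ are linear in their first argument produces precisely the asserted identity. So the genuine work lies entirely in verifying the consistency of the exact solution.

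The first reduction uses the transmission conditions. Since $\widetilde u = u^- = u^+$ on $\Gamma$, the two jump quantities $u^- - \widetilde u$ and $u^+ - \widetilde u$ vanish pointwise on $\Gamma$. Consequently every adjoint-consistency term of the form $\left<\,\cdot-\widetilde{(\cdot)},\ \partial_n(\cdot)\right>_{\Gamma}$, every penalty term $\tfrac{\tau}{h}\left<\,\cdot\,,\cdot\right>_{\Gamma}$, and the term $-\left<u^+-\widetilde u,\ \varphi_h\right>_{\Gamma}$ drop out of $a_h$ and $b_h$. For the FEM part I would then invoke the integration-by-parts identity~\eqref{eq:fem}, which is valid for the exact $u^-$ tested against any $v_h^-\in H^1(\Omega^-)$ (not merely $H^1_0$): it converts the bulk terms together with $-\left<\partial_n u^-,\ v_h^-\right>_{\Gamma}$ into $\int_{\Omega^-} f v_h^- \, dx$. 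What remains from $a_h$ is therefore exactly $\int_{\Omega^-} f v_h^- \, dx + \left<\partial_n u^-,\ \widetilde v_h\right>_{\Gamma}$.

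For the BEM part I would use the exterior Calder\'on relation, i.e.\ the right equation of~\eqref{eq:bound_integral_eq} with $C^+$, applied to $(u^+,\lambda)=(\gamma_D^+u,\gamma_N^+u)$. Its first row yields $\left(\tfrac12 Id - K\right)u^+ + V\lambda = 0$, which annihilates the first two terms of $b_h$; its second row gives $W u^+ - \left(\tfrac12 Id - K'\right)\lambda = -\lambda$, so the $W$ and $K'$ terms collapse to $-\left<\lambda,\ v_h^+\right>_{\Gamma}$. Adding the surviving term $\left<\lambda,\ v_h^+-\widetilde v_h\right>_{\Gamma}$, the whole form reduces to $b_h = -\left<\lambda,\ \widetilde v_h\right>_{\Gamma}$. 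Summing the two contributions, the two remaining interface terms cancel because the flux transmission condition $\partial_n u^- = \partial_n u^+ = \lambda$ gives $\left<\partial_n u^-,\ \widetilde v_h\right>_{\Gamma} = \left<\lambda,\ \widetilde v_h\right>_{\Gamma}$, leaving precisely $\int_{\Omega^-} f v_h^- \, dx$, as required.

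The step I expect to be the main obstacle is the bookkeeping around the Calder\'on projector: one must track the signs of $K$, $K'$, $V$, $W$ carefully so that the first row of $C^+$ genuinely cancels the $\left<(\tfrac12 Id - K)u^+,\varphi_h\right>_{\Gamma}+\left<V\lambda,\varphi_h\right>_{\Gamma}$ pair and the second row reproduces exactly $-\lambda$ rather than some other multiple. A secondary point requiring attention is well-definedness of the boundary pairings: the hypotheses $u^-\in H^{\frac32+\delta}(\Omega^-)$ and $\lambda=\partial_n u^+\in L^2(\Gamma)$ guarantee $\partial_n u^-\in L^2(\Gamma)$, so that all terms $\left<\partial_n u^-,\cdot\right>_{\Gamma}$ and $\left<\lambda,\cdot\right>_{\Gamma}$ make sense against the (merely $L^2$) discrete test functions, which is what makes the elementary cancellations above legitimate.
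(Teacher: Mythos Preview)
Your proposal is correct and follows essentially the same route as the paper's proof: use the transmission conditions $\widetilde u = u^- = u^+$ and $\partial_n u^- = \partial_n u^+ = \lambda$ to strip out the jump and penalty terms, invoke integration by parts~\eqref{eq:fem} for $a_h$, and use the exterior Calder\'on identities~\eqref{eq:bound_integral_eq} to collapse $b_h$, leaving only $\langle\partial_n u^-,\widetilde v_h\rangle_\Gamma - \langle\lambda,\widetilde v_h\rangle_\Gamma = 0$ after summation. Your write-up is in fact more explicit than the paper's, which suppresses the Calder\'on computation entirely and simply records the reduced expressions for $a_h$ and $b_h$ before adding them.
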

\begin{proof}
Because of the transmission conditions from~\eqref{eq:divided_problem} we have $u = u^+ = u^-$ and $\partial_n u = \partial_n u^+ = \partial_n u^-$ on $\Gamma$
  \begin{align*}
  a_h\left(\left(u-u_h^-, \widetilde{u} - \widetilde{u}_h \right), \left(v_h^-, \widetilde{v}_h\right)\right) = & \left<\partial_n u, \widetilde{v}_h\right>_{\Gamma} - \left<u - \widetilde{u}, \partial_n v_h^- \right>_{\Gamma}  \\
  & + \tfrac{\tau}{h} \left<u - \widetilde{u}, v_h^- - \widetilde{v}_h\right>_{\Gamma} ,\\
  b_h\left( \left(u-u_h^+, \lambda - \lambda_h, \widetilde{u} -\widetilde{u}_h\right), \left(v_h^+, \varphi_h, \widetilde{v}_h\right)\right)  = & -  \left< \lambda, \widetilde{v}_h\right>_{\Gamma} -  \left<u - \widetilde{u}, \varphi \right>_{\Gamma} \\
  &+  \tfrac{\tau}{h} \left<u - \widetilde{u}, v_h^+ - \widetilde{v}_h\right>_{\Gamma}.
 \end{align*}
 By adding above expressions and using the facts that $\lambda = \partial_n u$ and $\widetilde{u} = u$ on $\Gamma$, we obtain consistency.
\end{proof}


\subsection{Error analysis}
\label{sec:error}

In this section we present the error estimates for the method. These estimates are proved using the following norm
\begin{equation}
	\big\|\big(u, \lambda, \widetilde{u}\big)\big\|_h := \left\|\left(u^-, \widetilde{u}\right)\right\|_{\mathcal{F}} + \left\|\left(u^+, \lambda, \widetilde{u} \right)\right\|_{\mathcal{B}}.
\end{equation}
\noindent The first step is the following version of Cea's lemma.

\begin{lemma}[Cea's Lemma]
\label{l:Cea}
Let $\delta > 0$, $u^- \in H^{\frac32+\delta}(\Omega^-)$, $u^+ \in H^{\frac{1}{2}}(\Gamma)$ and $\partial_n u^+ = \lambda \in L^{2}(\Gamma)$ be the solution of problem~\eqref{eq:divided_problem}, $\widetilde{u} = u^- = u^+$ on $\Gamma$, and let $\left(u_h, \lambda_h, \widetilde{u}_h\right) \in \mathcal{V}_h\times \Lambda_{h}^{l} \times M_{h}^{m}$ solve~\eqref{eq:fem_bem_disctrete}. Then there exists $C > 0$, independent of $h$, such that
\begin{equation}
\label{eq:cea}
\left\|\left(u-u_h, \lambda - \lambda_h, \widetilde{u}- {\widetilde{u}_h}\right)\right\|_h \leq C \inf_{\left(v_h, \varphi_h, \widetilde{v}_h\right) \in \mathcal{V}_h \times \Lambda_{h}^{l} \times M_{h}^{m}} \left\|\left(u-v_h, \lambda - \varphi_h, \widetilde{u}- {\widetilde{v}_h}\right)\right\|_h.
\end{equation}
\end{lemma}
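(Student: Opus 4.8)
The plan is to run the standard coercivity–continuity–consistency (Strang/Cea) argument, treating the two bilinear forms as a single form. First I would introduce the combined form
\[
\mathcal{A}\big((w,\lambda,\widetilde w),(v,\varphi,\widetilde v)\big) := a_h\big((w^-,\widetilde w),(v^-,\widetilde v)\big) + b_h\big((w^+,\lambda,\widetilde w),(v^+,\varphi,\widetilde v)\big),
\]
so that the discrete problem~\eqref{eq:fem_bem_disctrete} reads $\mathcal{A}\big((u_h,\lambda_h,\widetilde u_h),(v_h,\varphi_h,\widetilde v_h)\big) = \int_{\Omega^-} f v_h^- \dx$, the consistency Lemma~\ref{l:consistency} becomes the Galerkin orthogonality $\mathcal{A}\big((u-u_h,\lambda-\lambda_h,\widetilde u-\widetilde u_h),(v_h,\varphi_h,\widetilde v_h)\big)=0$ for every discrete test triple, and Lemmas~\ref{l:continuity_discrete}, \ref{l:coercivity_discrete} supply continuity and coercivity of $\mathcal{A}$. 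I would record once that $\|\cdot\|_h^2 \le 2\big(\|\cdot\|_{\mathcal F}^2 + \|\cdot\|_{\mathcal B}^2\big) \le 2\|\cdot\|_h^2$, so that the split norm on the right-hand side of coercivity~\eqref{eq:coercivity_discrete} is equivalent to $\|\cdot\|_h$.

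Next, I would fix an arbitrary discrete triple $(v_h,\varphi_h,\widetilde v_h)$ and split the error as $e := (u-u_h,\lambda-\lambda_h,\widetilde u-\widetilde u_h) = \eta - e_h$, where $\eta := (u-v_h,\lambda-\varphi_h,\widetilde u-\widetilde v_h)$ is the approximation error and $e_h := (u_h-v_h,\lambda_h-\varphi_h,\widetilde u_h-\widetilde v_h)$ is a fully discrete triple. Applying coercivity to $e_h$ and then Galerkin orthogonality, which is legitimate because $e_h$ lies in the discrete space, gives
\[
\tfrac{\alpha}{2}\,\|e_h\|_h^2 \le \mathcal{A}(e_h,e_h) = \mathcal{A}(\eta,e_h) - \mathcal{A}(e,e_h) = \mathcal{A}(\eta,e_h).
\]
Continuity applied term by term, together with the elementary bound $ac+bd \le (a+b)(c+d)$ for nonnegative numbers, yields $\mathcal{A}(\eta,e_h) \le \beta\,\|\eta\|_h\,\|e_h\|_h$ with $\beta := \max\{\beta_{\mathcal F},\beta_{\mathcal B}\}$. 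Dividing by $\|e_h\|_h$ leaves $\|e_h\|_h \le \tfrac{2\beta}{\alpha}\|\eta\|_h$, and a triangle inequality $\|e\|_h \le \|\eta\|_h + \|e_h\|_h \le \big(1+\tfrac{2\beta}{\alpha}\big)\|\eta\|_h$ followed by taking the infimum over $(v_h,\varphi_h,\widetilde v_h)$ concludes~\eqref{eq:cea} with $C = 1 + 2\beta/\alpha$.

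The one point that needs care — and the main obstacle — is that continuity (Lemma~\ref{l:continuity_discrete}) is only available for arguments regular enough to make the boundary terms in the strengthened norms finite: the $\mathcal F$ norm carries $h\|\partial_n(\cdot)\|_{L^2(\Gamma)}^2$ and the $\mathcal B$ norm carries $h\|\lambda\|_{L^2(\Gamma)}^2$. I must therefore check that $\|\eta\|_h < \infty$ so that the mixed pairing $\mathcal{A}(\eta,e_h)$ can be bounded. This is exactly where the regularity hypotheses enter: $u^- \in H^{\frac32+\delta}(\Omega^-)$ makes $\partial_n u^-$ well defined and in $L^2(\Gamma)$ and, via the trace theorem~\eqref{eq:trace_theorem}, controls the $H^{\frac12}(\Gamma)$ contributions, while $\lambda \in L^2(\Gamma)$ controls the flux term; for the discrete parts $v_h^-,\varphi_h$ the normal trace and $L^2(\Gamma)$ flux are finite because they are piecewise polynomial, and for $e_h$ the discrete trace inequality~\eqref{eq:discrete_trace_inequality} through the norm equivalences~\eqref{eq:fem_norm_equivalence}--\eqref{eq:bem_norm_equivalence} bounds the extra terms. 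With both $\|\eta\|_h$ and $\|e_h\|_h$ finite the term-by-term Cauchy–Schwarz bound behind Lemma~\ref{l:continuity_discrete} applies, and the argument closes.
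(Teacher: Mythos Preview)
Your proof is correct and follows essentially the same route as the paper's: combine $a_h$ and $b_h$ into a single form, invoke coercivity (Lemma~\ref{l:coercivity_discrete}) on the discrete difference $e_h$, use consistency (Lemma~\ref{l:consistency}) as Galerkin orthogonality, bound the resulting mixed pairing by continuity (Lemma~\ref{l:continuity_discrete}), and finish with the triangle inequality. The paper phrases the coercivity step as an inf--sup statement (normalising the test function so that $\|(w_h,\phi_h,\widetilde w_h)\|_h=1$), whereas you apply coercivity in its quadratic form and divide by $\|e_h\|_h$; these are equivalent for a coercive bilinear form and differ only in the harmless constant ($1+2\beta/\alpha$ versus $1+\beta/\alpha$). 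Your explicit check that $\|\eta\|_h<\infty$ under the stated regularity is a welcome clarification the paper leaves implicit.
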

\begin{proof}
Let us denote
\begin{align*}
A_h \left(\left(w_h, \phi_h, \widetilde{w}_h\right), \left(v_h, \varphi_h, \widetilde{v}_h\right)\right) := & a \left(\left(w_h^-, \widetilde{w}_h\right),\left(v_h^-, \widetilde{v}_h\right)\right) \\
& + b\left(\left(w_h^+, \phi_h, \widetilde{w}_h\right), \left(v_h^+, \varphi_h, \widetilde{v}_h\right)\right).
\end{align*}
Using Lemma~\ref{l:coercivity_discrete}, we get that there exists $\alpha > 0$, independent of $h$, such that for all $\left(v_h, \varphi_h, \widetilde{v}_h\right) \in \mathcal{V}_h \times \Lambda_{h}^{l} \times M_{h}^{m}$ there exists $\left(w_h, \phi_h, \widetilde{w}_h\right) \in \mathcal{V}_h\times \Lambda_{h}^{l} \times M_{h}^{m}$ such that $\left\|\left(w_h, \phi_h, \widetilde{w}_h\right)\right\|_h=1$, and 
\begin{equation}
\label{eq:coercivity_l}
A_h \left(\left(v_h, \varphi_h, \widetilde{v}_h\right), \left(w_h, \phi_h, \widetilde{w}_h\right)\right) \geq \alpha \left\|\left(v_h, \varphi_h, \widetilde{v}_h\right)\right\|_h.
\end{equation}
Now using Lemma~\ref{l:continuity_discrete}, we get continuity of $A_h$, there exists $\beta > 0$
\begin{align}
\label{eq:continuity_l}
\left|A_h \left(\left(v, \varphi, \widetilde{v}\right)\right), \left(w, \phi, \widetilde{w}\right)\right| & \leq 
\beta \left\|\left(v, \varphi, \widetilde{v}\right)\right\|_h \left\|\left(w, \phi, \widetilde{w}\right)\right\|_h.
\end{align}
Let $\left(v_h, \varphi_h, \widetilde{v}_h\right) \in \mathcal{V}_h \times \Lambda_{h}^{l} \times M_{h}^{m}$. 
Then, using the triangle inequality we see that
\begin{align*}
\left\|\left(u-u_h, \lambda - \lambda_h, \widetilde{u}- {\widetilde{u}_h}\right)\right\|_h \leq & \left\|\left(u-v_h, \lambda - \varphi_h, \widetilde{u}- {\widetilde{v}_h}\right)\right\|_h \\
& + \left\|\left(v_h-u_h, \varphi_h-\lambda_h, \widetilde{v}_h -\widetilde{u}_h\right)\right\|_h.
\end{align*}
and it follows from \eqref{eq:coercivity_l}, Lemma~\ref{l:consistency} and~\eqref{eq:continuity_l} that
\begin{align*}
\left\|\left(v_h-u_h, \varphi_h-\lambda_h, \widetilde{v}_h -\widetilde{u}_h\right)\right\|_h \leq &  \tfrac{1}{\alpha} A_h \left(\left(v_h-u, \varphi_h- \lambda, {\widetilde{v}_h} - \widetilde{u} \right), \left(w_h, \phi_h, \widetilde{w}_h\right)\right) \\
& +  \tfrac{1}{\alpha} A_h \left(\left(u - u_h, \lambda - \lambda_h, \widetilde{u} - \widetilde{u}_h \right), \left(w_h, \phi_h, \widetilde{w}_h\right)\right) \\
\leq &  \tfrac{\beta}{\alpha} \left\|\left(v_h-u, \varphi_h- \lambda, {\widetilde{v}_h} - \widetilde{u} \right)\right\|_h.
\end{align*}
Thus, we get~\eqref{eq:cea} with $C := 1+\tfrac{\beta}{\alpha}$.
\end{proof}
\begin{lemma}[Energy norm estimates]
\label{l:energy_norm}
For $s > \tfrac32$, $r > 1$ and $p \geq \tfrac12$, let $u^- \in H^{s}(\Omega^-)$, $u^+ \in \widetilde H^{r}(\Gamma)$ and $\partial_n u^+ = \lambda \in \widetilde H^{p}(\Gamma)$ be the solution of problem~\eqref{eq:divided_problem}. On $\Gamma$ there holds  $u^-  = u^+ = \widetilde{u} $ on $\Gamma$. Let $\left(u_h, \lambda_h, \widetilde{u}_h\right) \in \mathcal{V}_h\times \Lambda_{h}^{l} \times M_{h}^{m}$ solve~\eqref{eq:fem_bem_disctrete}. If the mesh is quasi uniform, then there exists $C > 0$, independent of $h$, such that
\begin{multline}
\label{eq:energy_norm}
\left\|\left(u-u_h, \lambda - \lambda_h, \widetilde{u}- {\widetilde{u}_h}\right)\right\|_h \leq \\ C \left( h^{\sigma-1} \|u\|_{H^{\sigma}(\Omega^-)} +  h^{\phi-\frac{1}{2}} \|u\|_{\widetilde H^{\phi}(\Gamma)} + h^{\xi-\frac{1}{2}} \|u\|_{\widetilde H^{\xi}(\Gamma)}  + h^{\psi+\frac{1}{2}} \|\lambda\|_{\widetilde H^{\psi}(\Gamma)} \right),
\end{multline}
where $\sigma = \min \{j+1, s\}$, $\phi = \min\{k+1, r\}$, $\xi = \min \{m+1, r\}$ and $\psi = \min \{l+1, p\}$.
\end{lemma}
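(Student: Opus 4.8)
The plan is to derive \eqref{eq:energy_norm} directly from the quasi-optimality already established in Lemma~\ref{l:Cea}. Since Cea's Lemma bounds the error in $\|\cdot\|_h$ by the best approximation error, it suffices to exhibit one admissible triple $\left(v_h,\varphi_h,\widetilde v_h\right)\in\mathcal{V}_h\times\Lambda_h^l\times M_h^m$ and to estimate $\left\|\left(u-v_h,\lambda-\varphi_h,\widetilde u-\widetilde v_h\right)\right\|_h$ for that choice. I would take $v_h^-=I_h u^-$ a Scott--Zhang (or, since $s>\tfrac32$, Lagrange) quasi-interpolant of $u^-$ onto $V_h^j$, $v_h^+=\pi_W u^+$ the interpolant of $u^+$ onto $W_h^k$, $\varphi_h=\pi_\Lambda\lambda$ the $L^2$-type interpolant of $\lambda$ onto $\Lambda_h^l$, and $\widetilde v_h=\pi_M\widetilde u$ the interpolant of $\widetilde u$ onto $M_h^m$. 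The remaining work is to insert the standard interpolation estimates into the two pieces $\|\cdot\|_{\mathcal{F}}$ and $\|\cdot\|_{\mathcal{B}}$ of the norm defined in \eqref{eq:FEM_norm}--\eqref{eq:BEM_norm} and to collect the powers of $h$.

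For the $\mathcal{F}$-part I would control the three contributions separately. The bulk term $\|u^- - I_h u^-\|_{H^1(\Omega^-)}$ gives the exponent $\sigma-1$ with $\sigma=\min\{j+1,s\}$. For the jump term $\tfrac{\tau}{h}\|(u^--v_h^-)-(\widetilde u-\widetilde v_h)\|_{L^2(\Gamma)}^2$ I would use the triangle inequality together with $u^-|_\Gamma=\widetilde u$, splitting it into a trace interpolation estimate $\|(u^--I_hu^-)|_\Gamma\|_{L^2(\Gamma)}\leq C h^{\sigma-\frac12}\|u\|_{H^\sigma(\Omega^-)}$ (yielding $h^{\sigma-1}$ after the $h^{-\frac12}$ weight) and the surface estimate $\|\widetilde u-\pi_M\widetilde u\|_{L^2(\Gamma)}\leq C h^{\xi}\|u\|_{\widetilde H^\xi(\Gamma)}$ (yielding $h^{\xi-\frac12}$). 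For the flux-trace term $h\,\|\partial_n(u^--v_h^-)\|_{L^2(\Gamma)}^2$ I would combine the element-wise trace theorem with a scaling argument to get $\|\partial_n(u^--I_hu^-)\|_{L^2(\Gamma)}\leq C h^{\sigma-\frac32}\|u\|_{H^\sigma(\Omega^-)}$, so that the $\sqrt h$ weight restores $h^{\sigma-1}$. This reproduces the first and third terms of \eqref{eq:energy_norm}.

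The $\mathcal{B}$-part is handled analogously on the boundary. The genuinely fractional contributions $\|u^+-\pi_W u^+\|_{H^{\frac12}(\Gamma)}\leq C h^{\phi-\frac12}\|u\|_{\widetilde H^\phi(\Gamma)}$ and $\|\lambda-\pi_\Lambda\lambda\|_{H^{-\frac12}(\Gamma)}\leq C h^{\psi+\frac12}\|\lambda\|_{\widetilde H^\psi(\Gamma)}$ produce the second and fourth terms, the latter through a duality (Aubin--Nitsche) argument on $\Gamma$ that gains the extra half power. The weighted $L^2$ terms $\tfrac{\tau}{h}\|(u^+-v_h^+)-(\widetilde u-\widetilde v_h)\|_{L^2(\Gamma)}^2$ and $h\,\|\lambda-\pi_\Lambda\lambda\|_{L^2(\Gamma)}^2$ are subordinate: the former splits (using $u^+|_\Gamma=\widetilde u$) into $h^{\phi-\frac12}$ and $h^{\xi-\frac12}$ pieces, while the latter gives $h^{\psi+\frac12}$. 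Summing all contributions and taking the infimum over the discrete spaces yields \eqref{eq:energy_norm}.

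I expect the main obstacle to be the trace and negative-norm interpolation estimates on the polyhedral surface $\Gamma$ rather than the bulk estimate. Two points require care: first, the term $h\,\|\partial_n(u^--I_hu^-)\|_{L^2(\Gamma)}^2$ is exactly where the hypothesis $s>\tfrac32$ enters, since only then is the normal trace of $\grad u^-$ a well-defined $L^2(\Gamma)$ object to which the scaled element-wise trace estimate applies; second, obtaining the sharp $h^{\psi+\frac12}$ rate in $H^{-\frac12}(\Gamma)$, rather than the naive $h^{\psi}$ in $L^2$, relies on a duality argument posed in the broken spaces $\widetilde H^s(\Gamma)$ and on the quasi-uniformity of the mesh, which also justifies replacing all local mesh sizes by the single parameter $h$ throughout.
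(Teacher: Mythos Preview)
Your proposal is correct and follows essentially the same approach as the paper: apply Cea's Lemma~\ref{l:Cea} and then bound the best-approximation error term by term in the $\mathcal{F}$- and $\mathcal{B}$-parts of $\|\cdot\|_h$, invoking standard bulk interpolation estimates for the $H^1(\Omega^-)$ and $h\|\partial_n(\cdot)\|_{L^2(\Gamma)}^2$ contributions and the surface approximation results of \cite[Theorems~4.3.19, 4.3.20, 4.3.22]{MR2743235} for the $H^{\pm\frac12}(\Gamma)$ and weighted $L^2(\Gamma)$ contributions. Your treatment is in fact more explicit than the paper's about the role of the hypothesis $s>\tfrac32$ and the duality gain in $H^{-\frac12}(\Gamma)$, but the underlying argument is the same.
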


\begin{proof}
The result is a consequence of \eqref{eq:cea} and approximation.
Applying triangle inequality and trace inequality~\cite[Theorem 1.6.6]{MR2373954} followed by Young's inequality, we obtain
 \begin{align*}
  \left\|\left(u - v_h, \widetilde{u} - \widetilde{v}_h\right)\right\|_{\mathcal{F}}^2 :=& \left\|u - v_h\right\|_{H^1(\Omega^-)}^2 + \tfrac{\tau}{h}   \left\|u - v_h -\left(\widetilde{u} - \widetilde{v}_h\right)\right\|_{L^2(\Gamma)}^2  \\
  &   + h  \left\|\partial_n u - \partial_n v_h\right\|_{L^2(\Gamma)}^2 \\
  \leq &  \left\|u - v_h\right\|_{H^1(\Omega^-)}^2 + C_1 \left( \tfrac{\tau}{h^2} \left\|u - v_h\right\|_{L^2(\Omega^-)}^2 +  \tau  \sum_{K \in \mathcal{T}_h}\left\|u - v_h\right\|_{H^1(K)}^2\right)   \\
  & + C_2 \left( \sum_{K \in \mathcal{T}_h}\left\|u - v_h\right\|_{H^1(K)}^2 + h^2 \sum_{K \in \mathcal{T}_h} \left\|u - v_h\right\|_{H^2(K)}^2\right) + \tfrac{\tau}{h} \left\|\widetilde{u} - \widetilde{v}_h\right\|_{L^2(\Gamma)}^2.
 \end{align*}
Using approximation results~\cite[Theorem 1.109]{MR2050138} 
and~\cite[Theorem 4.3.19]{MR2743235} for the last term, we claim
\begin{equation*}
 \inf_{\left(v_h, \widetilde{v}_h\right) \in \mathcal{V}_h \times M_{h}^{m}} \left\|\left(u - v_h, \widetilde{u} - \widetilde{v}_h\right)\right\|_{\mathcal{F}} \leq C_{\mathcal{F}} \left(h^{\sigma-1} \|u\|_{H^{\sigma}(\Omega^-)} + h^{\xi-\frac{1}{2}} \|u\|_{\widetilde H^{\xi}(\Gamma)} \right).
\end{equation*}
%
For the boundary part, by applying triangle inequality, we obtain
 \begin{align*}
 \left\|\left(u - v_h, \lambda - \lambda_h, \widetilde{u} - \widetilde{v}_h\right)\right\|_{\mathcal{B}}^2 := & \|u - v_h\|_{H^{\frac{1}{2}}(\Gamma)}^2 + \|\lambda - \lambda_h\|_{H^{-\frac{1}{2}}(\Gamma)}^2  + h \left\|\lambda - \lambda_h\right\|_{L^2(\Gamma)}^2 \\
 & + \tfrac{\tau}{h} \left\|u - v_h - \left(\widetilde{u} - \widetilde{v}_h\right)\right\|_{L^2(\Gamma)}^2 \\
 \leq & \|u - v_h\|_{H^{\frac{1}{2}}(\Gamma)}^2 + \|\lambda - \lambda_h\|_{H^{-\frac{1}{2}}(\Gamma)}^2 + h \left\|\lambda - \lambda_h\right\|_{L^2(\Gamma)}^2\\
 &  + \tfrac{\tau}{h} \left\|u - v_h\right\|_{L^2(\Gamma)}^2  + \tfrac{\tau}{h} \left\|\widetilde{u} - \widetilde{v}_h\right\|_{L^2(\Gamma)}^2.
 \end{align*}
 Using approximation results~\cite[Theorems 4.3.19, 4.3.20 and 4.3.22]{MR2743235}
 , we claim
\begin{multline*}
 \inf_{\left(v_h, \lambda_h, \widetilde{v}_h\right) \in \mathcal{V}_h \times \Lambda_h^l \times M_{h}^{m}} \left\|\left(u - v_h, \lambda - \lambda_h, \widetilde{u} - \widetilde{v}_h\right)\right\|_{\mathcal{B}} \leq \\ C_{\mathcal{B}} \left( h^{\phi-\frac{1}{2}} \|u\|_{\widetilde H^{\phi}(\Gamma)} + h^{\xi-\frac{1}{2}} \|u\|_{\widetilde H^{\xi}(\Gamma)}  + h^{\psi+\frac{1}{2}} \|\lambda\|_{\widetilde H^{\psi}(\Gamma)}\right).
\end{multline*}
%
We conclude the proof by applying Lemma~\ref{l:Cea}.
\end{proof}


\section{Iterative solution}
For the solution of the linear system we will iterate on the Schur complement for the trace variable, solving independently in the two sub domains. To justify this split approach we here show that a simple relaxed Jacobi iteration on the two systems will converge. The condition number of the Schur complement can be analysed using the arguments of \cite[Section 4]{MR4021278}.
\label{sec:iterative}
\begin{enumerate}
    \item Given $\widetilde u^n$ solve for $u^{n+1}$ and $\lambda^{n+1}$ by solving the linear system
    \[
    A_h[(u^{n+1}, \lambda^{n+1}, \widetilde u^n), (v,\varphi,0)] = \int_{\Omega^{-}} f v ~dx.
    \]
    \item Given $u^{n+1}$ and $\lambda^{n+1}$, solve for the new trace variable $\widetilde u^{n+1}$, for $\sigma>0$,
    \[
    A_h[(u^{n+1}, \lambda^{n+1}, \widetilde u^{n+1}), (0,0,\widetilde v)] + \sigma \tfrac{\tau}{h} \left<\widetilde u^{n+1} - \widetilde u^{n},\widetilde v  \right>_{\Gamma} = 0.
    \]
\end{enumerate}
To prove that the iterative algorithm converges we only need to show that if $f = 0$, $u^{n+1}$, $\widetilde u^{n+1}$ and $\lambda^{n+1}$ all go to zero as $n \rightarrow \infty$.

We add and subtract $\widetilde u^{n+1}$ in the first equation and add the second to obtain
 \begin{multline*}
    A_h[(u^{n+1}, \lambda^{n+1}, \widetilde u^{n+1}), (v,\varphi,\widetilde v)] + \sigma \tfrac{\tau}{h} \left<\widetilde u^{n+1} - \widetilde u^{n},\widetilde v  \right>_{\Gamma} = \\
    \left<(\widetilde u^{n+1} - \widetilde u^{n}), \partial_n v + \varphi - \tfrac{\tau}{h} ( (v^- + v^+ )\right>_{\Gamma}.
    \end{multline*}
    Test this equation with $u^{n+1}, \ \lambda^{n+1}, \ \widetilde u^{n+1}$ and use coercivity to obtain
    \begin{multline*}
    \tfrac12 \sigma \tfrac{\tau}{h} \|\widetilde u^{N}\|_{L^2(\Gamma)}^2 +  \sum_{n=0}^{N-1} ( \alpha \|(u^{n+1}, \lambda^{n+1}, \widetilde u^{n+1})\|_h^2 + \tfrac12 \sigma \tfrac{\tau}{h} \|\widetilde u^{n+1} - \widetilde u^n\|_{L^2(\Gamma)}^2)\\
    \leq \tfrac12 \sigma \tfrac{\tau}{h} \|\widetilde u^{0}\|_{L^2(\Gamma)}^2 + \sum_{n=0}^{N-1}  \left<(\widetilde u^{n+1} - \widetilde u^n), \partial_n u^{n+1} + \lambda^{n+1} - \tfrac{\tau}{h} ( (u^{n+1})^- + (u^{n+1})^+ )\right>_{\Gamma}.
    \end{multline*}
    Here we used the well known formula
    \begin{equation}\label{eq:telescope}
\sum_{n=0}^{N-1} \left<(\widetilde u^{n+1} - \widetilde u^n), \widetilde u^{n+1}\right>_{\Gamma} = \tfrac12  \|\widetilde u^{N}\|_{L^2(\Gamma)}^2 - \tfrac12  \|\widetilde u^{0}\|_{L^2(\Gamma)}^2 + \tfrac12  \sum_{n=0}^{N-1} \|\widetilde u^{n+1} - \widetilde u^n\|_{L^2(\Gamma)}^2 .
\end{equation}
}
Considering the terms on the right hand side and using trace inequality~\eqref{eq:discrete_trace_inequality} we see that
\[
 \left<(\widetilde u^{n+1} - \widetilde u^n), \partial_n (u^{n+1})^-\right>_{\Gamma} \leq \tfrac14 \sigma \tfrac{\tau}{h} \|\widetilde u^{n+1} - \widetilde u^n\|^2_{L^2(\Gamma)} + C_{max} \tau^{-1} \sigma^{-1}\|u^{n+1}\|_{H^1(\Omega^-)}^2.
\]
Using the duality pairing between $H^{\frac12}$ and $H^{-\frac12}$ followed by the global inverse inequality $\|\widetilde u^{n+1} - \widetilde u^n\|_{H^{\frac12}(\Gamma)} \leq C_t h^{-\frac12} \|\widetilde u^{n+1} - \widetilde u^n\|_{L^2(\Gamma)}$ (see~\cite[Theorem 4.4.3]{MR2743235}) and Young's inequality we have
\[
 \left<(\widetilde u^{n+1} - \widetilde u^n), \lambda^{n+1} \right>_{\Gamma} \leq \tfrac14 \sigma \tfrac{\tau}{h} \|\widetilde u^{n+1} - \widetilde u^n\|^2_{L^2(\Gamma)} + C_t^{-2} \tau^{-1} \sigma^{-1}\|\lambda^{n+1}\|_{H^{-\frac12}(\Gamma)}^2.
\]
Finally 
\begin{align*}
\left<(\widetilde u^{n+1} - \widetilde u^n), \tfrac{\tau}{h} ( (u^{n+1})^- + (u^{n+1})^+ )\right>_{\Gamma} \leq & \ \tfrac14 \sigma \tfrac{\tau}{h} \|\widetilde u^{n+1} - \widetilde u^n\|^2_{L^2(\Gamma)}& \\
+ \sigma^{-1} 
\big( \tfrac{\tau}{h} \|(u^{n+1})^+ - \widetilde u^{n+1}\|^2_{L^2(\Gamma)} &+\tfrac{\tau}{h} \|(u^{n+1})^- - \widetilde u^{n+1}\|^2_{L^2(\Gamma)} \big) \\
&+ 2 \tfrac{\tau}{h} \left<(\widetilde u^{n+1} - \widetilde u^n), \widetilde u^{n+1}\right>_{\Gamma}.
\end{align*}
Using the once again the telescoping property \eqref{eq:telescope} we see that
for 
$$\sigma > \tau^{-1} \alpha^{-1} \max\left\{C_{max}, C_t^{-2}, \tau\right\} + 2,$$ 
the right hand sides can all be absorbed in the left hand side to yield
\[
 \sum_{n=0}^{N-1} \left( \widetilde \alpha \|(u^{n+1}, \lambda^{n+1}, \widetilde u^{n+1})\|_h^2 + \tfrac{1}{4} \sigma \tfrac{\tau}{h} \|\widetilde u^{n+1} - \widetilde u^n\|_{L^2(\Gamma)}^2\right) \leq \tfrac12 (\sigma  - 2) \tfrac{\tau}{h} \|\widetilde u^{0}\|_{L^2(\Gamma)}^2.
\]
It follows that as $N \rightarrow \infty$ $u^{n+1}$, $\widetilde u^{n+1}$ and $\lambda^{n+1}$ all go to zero, since the sum of the left hand side has to be bounded by the constant of the right hand side.


\section{Numerical experiments}
\label{sec:numerics}

In our experiment tests we consider $\varepsilon = 1$ and $\mathcal{V}_h\times \Lambda_{h}^{l} \times M_{h}^{m}$ with $j = k = m = 1$. The value $l$ varies depending on the geometry of domains considered. We let the trace meshes $\mathcal{G}_1$ and $\mathcal{G}_2$ coincide with the trace mesh of $\mathcal{T}_h$ on $\Gamma$. For our experiments we use two numerical softwares: FEniCS~\cite{AlnaesBlechta2015a} and Bempp~\cite{SmigajEtAl2015}.
We use the solution of interior and exterior Dirichlet boundary value problems to construct a Schur complement system
solving the following equations
\begin{equation*}
   \tfrac{\tau}{h} \left( u_s - \widetilde{u}\right) + \tfrac{\tau}{h} \left( u_m - \widetilde{u}\right) - \left(\partial_n u -\lambda\right)  = 0 \mbox{ on } \Gamma.
\end{equation*}
The solution $\widetilde{u}$ on $\Gamma$ of the Schur complement is obtained using the nested conjugate gradient method (CG)~\cite{MR0060307}. Although one can use direct solvers to solve the interior and exterior Dirichlet boundary value problems, we here use iterative solvers to apply preconditioners.  The interior Dirichlet boundary value problem that is a symmetric system associated with bilinear form $a_h$~\eqref{eq:fem_bem_disctrete} is solved by using FEniCS and CG without and with algebraic multigrid preconditioner. The discrete exterior problem associated with bilinear form $b_h$~\eqref{eq:fem_bem_disctrete} is not symmetric, however as we shown in Section~\ref{sec:symmetric}, we can apply the Steklov-Poincar\'e operator to the flux variable and transform the equations into a symmetric system. For clarity of the code, we here simply used the generalized minimal residual method (GMRES)~\cite{MR848568} without or with mass matrix preconditioner to solve in Bempp the external Dirichlet boundary value problem.
The tolerance of the iterative solvers is chosen to be not greater the $10^{-8}$. 
A Jupyter notebook demonstrating the functionality used in this paper will be made available at \url{www.bempp.com}.
 
\subsection{Choice of parameter $\tau$}

Thanks to Lemma~\ref{l:coercivity_discrete} we know that the stabilisation parameter $\tau$ in the discrete problem~\eqref{eq:fem_bem_disctrete} must be large enough to assure the coercivity. We start with an experiment showing how the value of the parameter $\tau$ influences the convergence and number of iterations. We consider $\Omega^-$ as a unit sphere with boundary $\Gamma$. We define 
\begin{align*}
 u^- (x, y, z) & = \frac{1}{2 \pi} \sin\left(\pi (x^2+y^2+z^2)\right)  + \frac{1}{2 \pi} \cos\left( \pi(x^2+y^2+z^2)\right) + \frac{2 \pi+1}{2 \pi}, \\
 u^+ (x, y, z) & = \frac{1}{\sqrt{x^2+y^2+z^2}}.
\end{align*}
It is easy to check that for the unit sphere domain $\Omega^-$ the above elementary functions are the solution of our problem~\eqref{eq:divided_problem}.

\begin{figure}[!h] 
\centering
 \subfloat[The error of the interior solution.
\label{fig:tau_FEM_error}]{\resizebox {0.32\columnwidth} {!}{
    \includegraphics{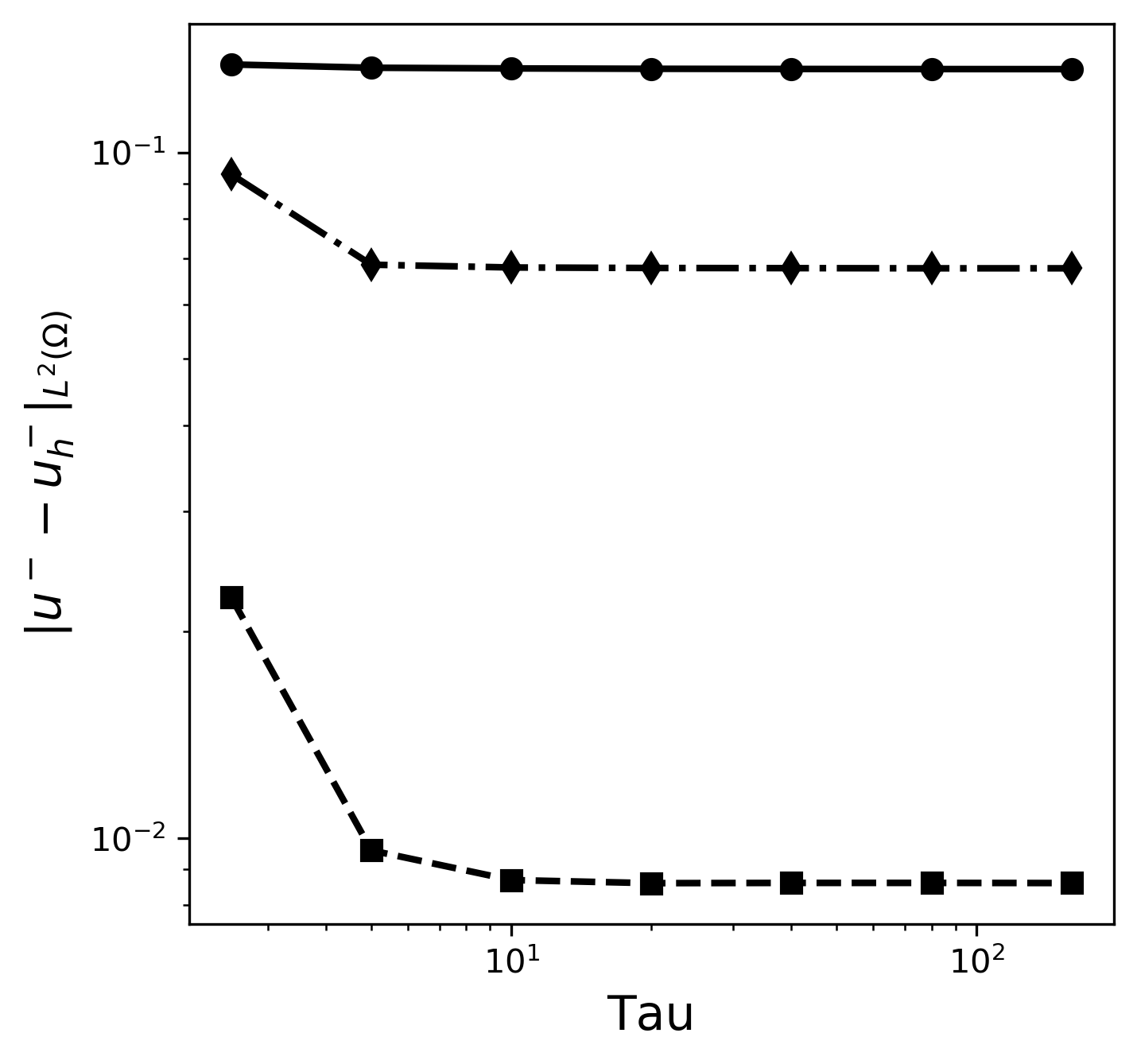}
}} \hfill
 \subfloat[The error of the exterior solution.
\label{fig:tau_BEM_error}]{\resizebox {0.32\columnwidth} {!}{
    \includegraphics{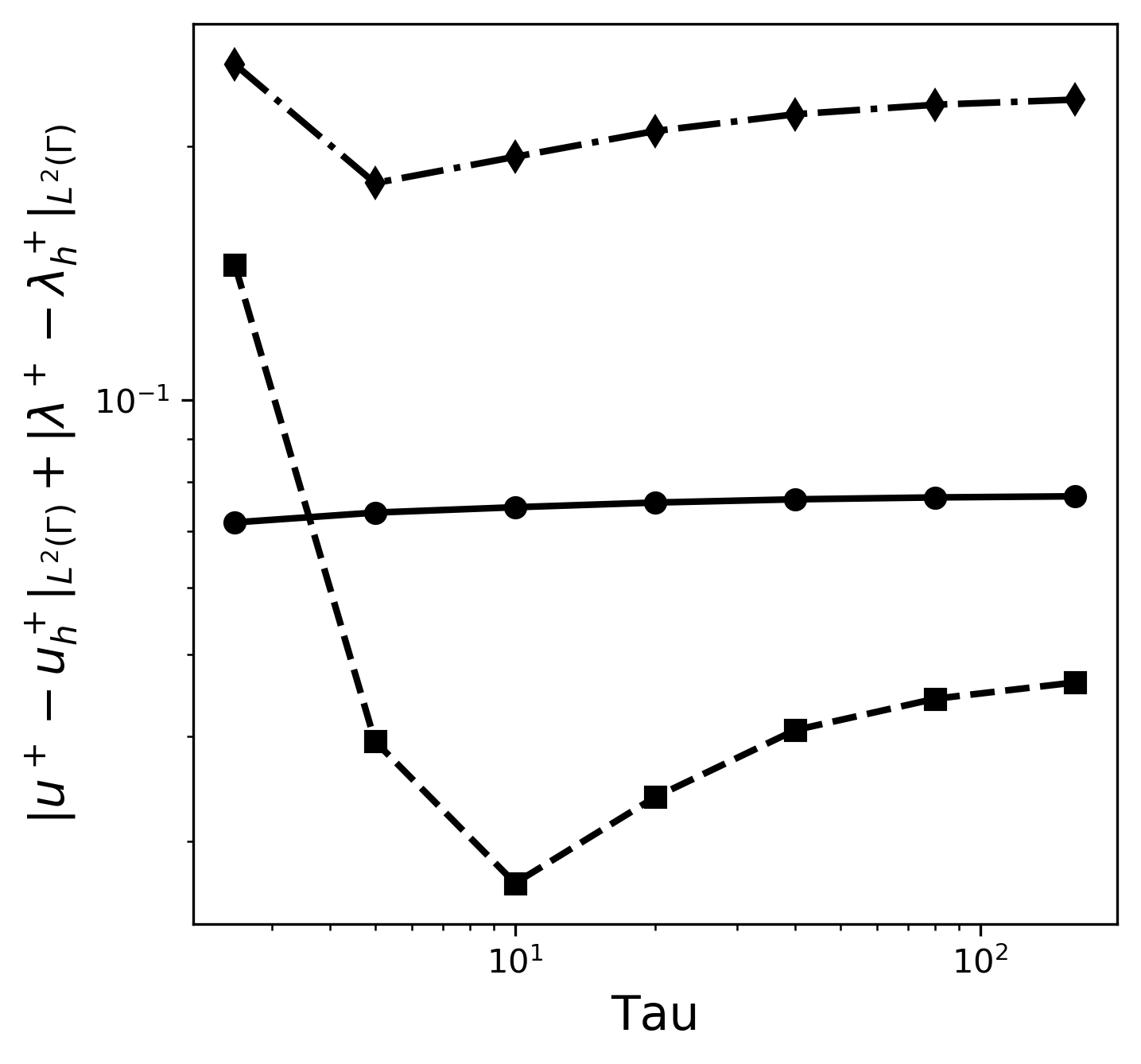}
}} \hfill
 \subfloat[Iteration taken by CG to solve the preconditioned system.
\label{fig:tau_iter}]{\resizebox {0.32\columnwidth} {!}{
    \includegraphics{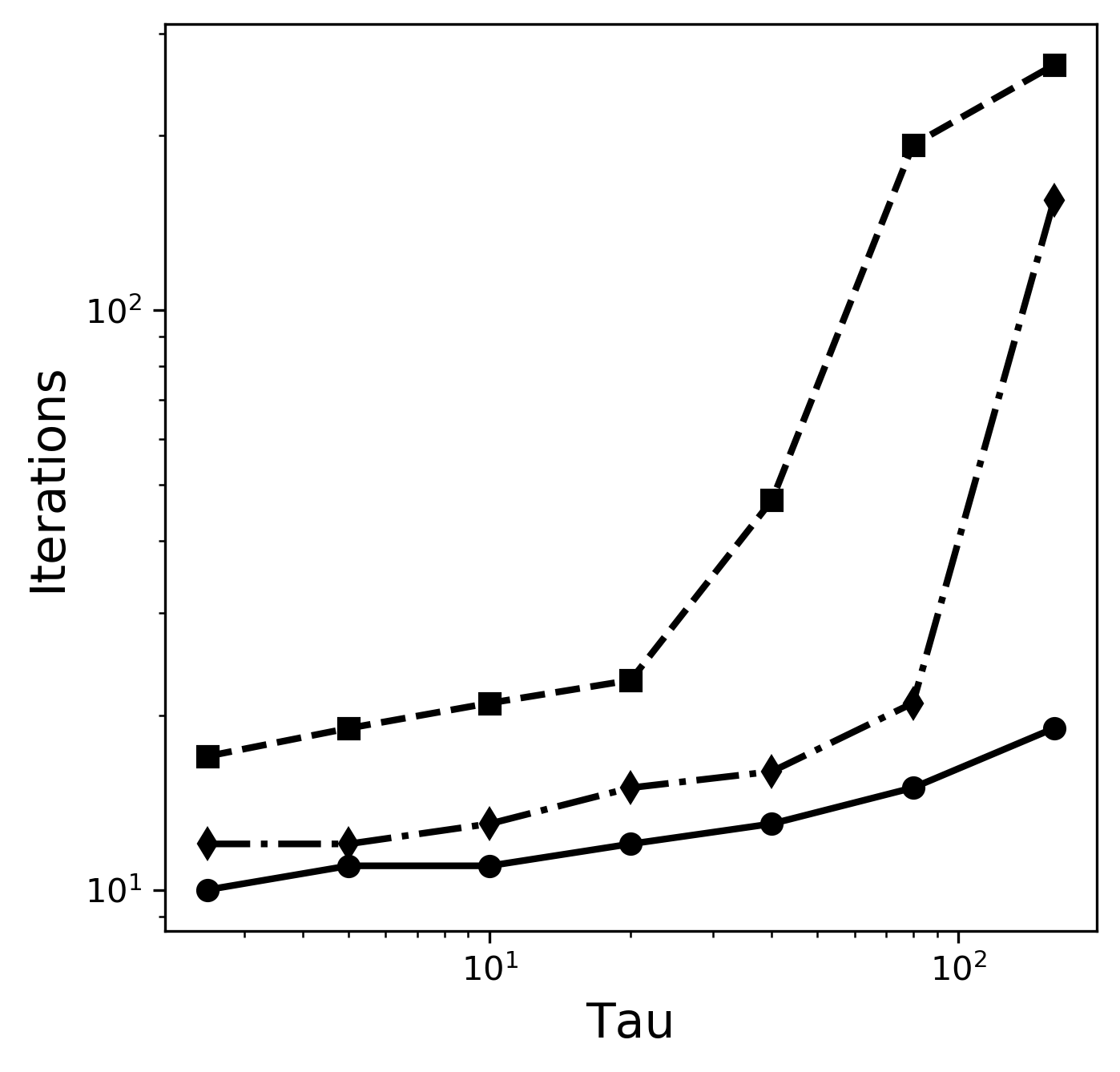}
 }} 
\caption{The dependence of the errors and iteration count on the value of $\tau$  for
$h < 2^{-1}$ (solid line with circles), $h <  2^{-2}$ (dash-dotted line with diamonds), and $h < 2^{-3.5}$ (dashed line with squares) for the problem on the unit sphere subdomain, with $j = k = m = l = 1$.}
\label{fig:tau_error}
 \end{figure} 
 
 
 Figure~\ref{fig:tau_error} shows the error values for different values of $\tau$ and $j = k = m = l = 1$. In this case, $\Gamma$  is smooth, and so $W_h^1 = \Lambda_h^1$. In Figure~\ref{fig:tau_FEM_error}, we plot in log-log scale the error of the interior solution $\|u^- -u_h^-\|_{L^2(\Omega^-)}$ and on Figure~\ref{fig:tau_BEM_error} the error of the exterior solutions $\|u^+ - u_h^+\|_{L^2(\Gamma)} + \|\lambda^+ - \lambda_h^+\|_{L^2(\Gamma)}$ for
$h < 2^{-1}$ (solid line with circles), $h <  2^{-2}$ (dash-dotted line with diamonds), and $h < 2^{-3.5}$ (dashed line with squares).

It can be seen from the Figures~\ref{fig:tau_error} 
that both errors stop decreasing when $\tau$ is around 10. Furthermore, for $\tau > 10$ the iterations increase with growing $\tau$, hence we fix $\tau = 10$ for the next experiments.

\subsection{Spherical subdomain}

Let $\Omega^-$ once again be the unit sphere, $\Gamma$  its boundary and consider the same exact solution as above. 

\begin{figure}[!h] 
\centering
 \subfloat[Error of the interior (dashed line with circles) and exterior solutions (solid line with squares). The dotted line shows order 2 convergence.
\label{fig:sphere_convergence}]{\resizebox {0.32\columnwidth} {!}{
    \includegraphics{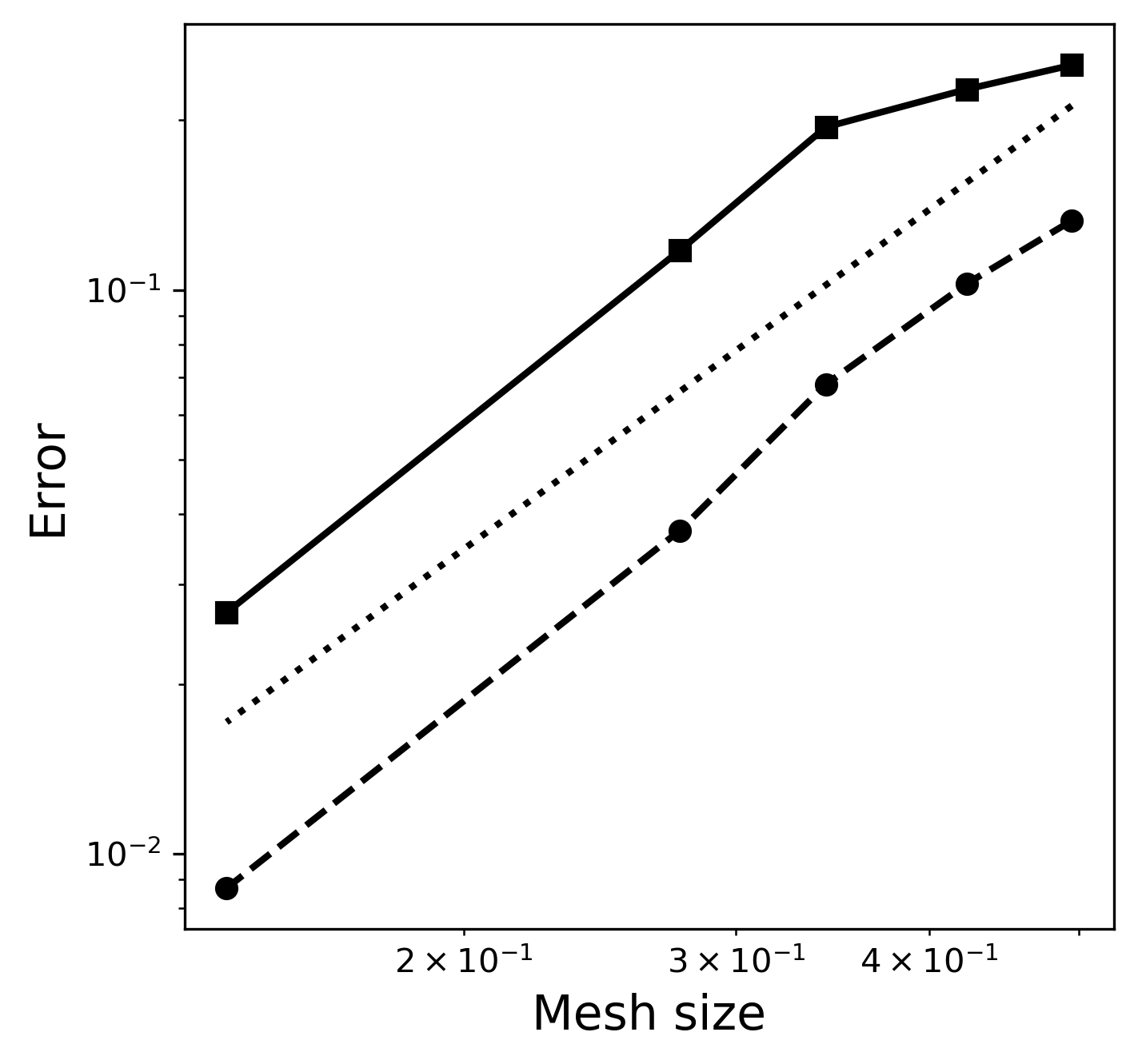}
}} \hfill
 \subfloat[Iteration taken by CG to solve the non-preconditioned system (dashed line), compared with the preconditioned system (solid line)
\label{fig:sphere_iter}]{\resizebox {0.32\columnwidth} {!}{
    \includegraphics{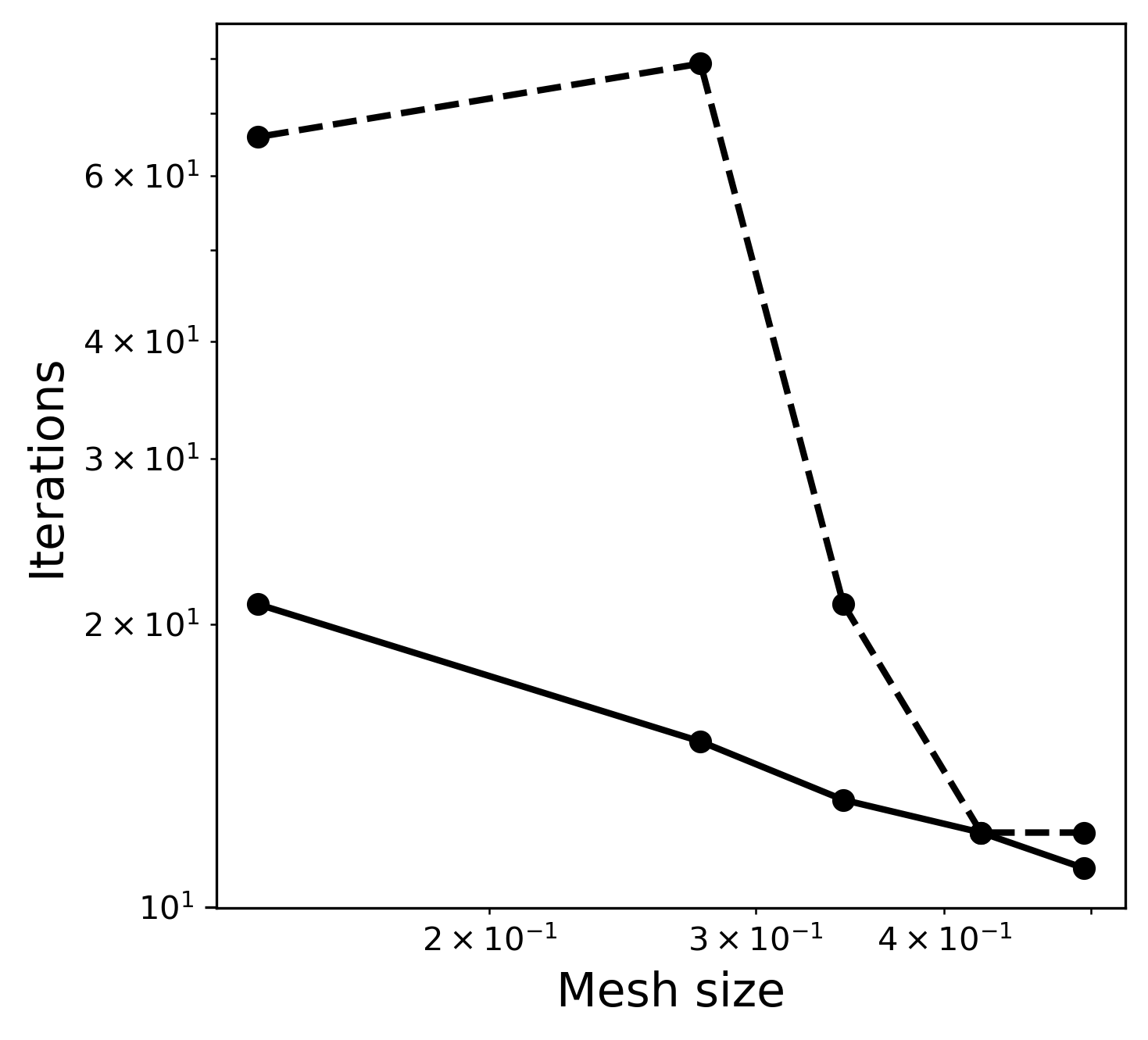}
 }} \hfill
 \subfloat[Time to solve the non-preconditioned system (dashed line), compared with the preconditioned system (solid line) of the whole discrete problem~\eqref{eq:fem_bem_disctrete}  including solving  interior and exterior systems.
\label{fig:sphere_time}]{\resizebox {0.32\columnwidth} {!}{
    \includegraphics{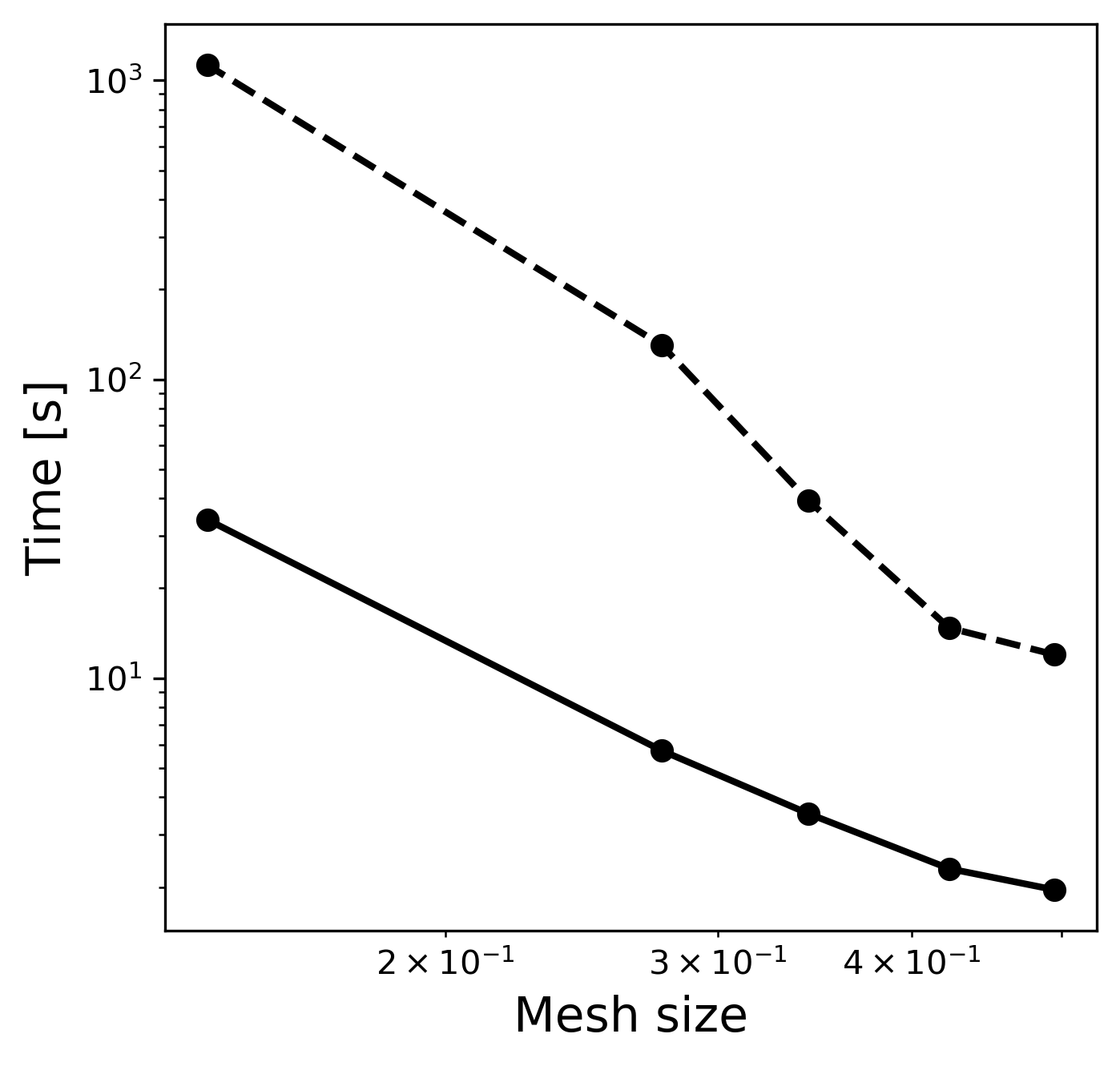}
 }} 
\caption{The convergence (left), CG iteration counts (middle) and solving time (right) for the problem on the unit sphere with  $\tau = 10$ and $j = k = m = l = 1$.}
\label{fig:sphere}
 \end{figure} 
Figure~\ref{fig:sphere} shows the convergence, CG iteration counts and solving time when $\tau = 10$ and $k = l = 1$. 
In Figure~\ref{fig:sphere_convergence}, we plot in log-log scale the error of the interior solution $\|u^- -u_h^-\|_{L^2(\Omega^-)}$ (dashed line with circles) and the error of the exterior solutions $\|u^+ - u_h^+\|_{L^2(\Gamma)} + \|\lambda^+ - \lambda_h^+\|_{L^2(\Gamma)}$ (solid line with squares).

 In Figure~\ref{fig:sphere_iter}, we plot in log-log scale the number of iterations taken by CG to solve the non-preconditioned system associated with exterior problem (dashed line), compared with the preconditioned system (solid line). In addition, Figure~\ref{fig:sphere_time} shows the time required by solvers of interior and exterior systems. The interior system is solved by CG with or without algebraic multigrid preconditioner and the exterior system is solved by GMRES with or without mass preconditioner. Preconditioning reduces both the iteration count and the CPU time needed by the solver.

\subsection{Cubical subdomain}
 
Let $\Omega^-= (0,1)^3$ be a cube and we solve the problem~\eqref{eq:divided_problem} with $f = 1$. We choose $\tau = 10$ and $j = k = m = l + 1 = 1$, where $\Lambda_h^0$ is the space of piece-wise constants per element in the trace space.

\begin{figure}[!h] 
\centering
 \subfloat[Error between interior solution and exterior. The dashed line shows order 2 convergence.
\label{fig:cube_convergence}]{\resizebox {0.32\columnwidth} {!}{
    \includegraphics{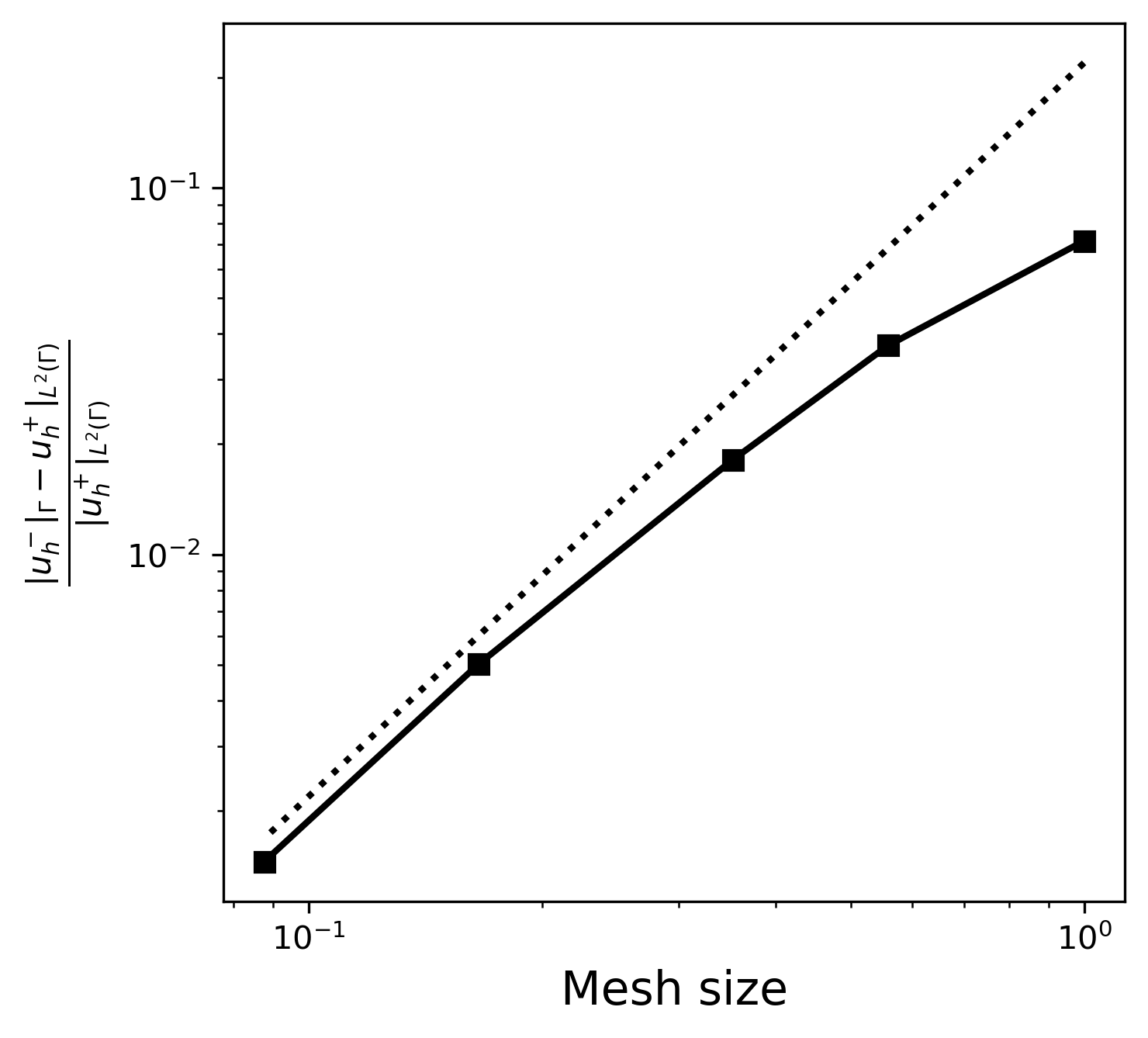}
}} \hfill
 \subfloat[Iteration taken by CG  to solve the non-preconditioned system (dashed line), compared with the preconditioned system (solid line)
\label{fig:cube_iter}]{\resizebox {0.32\columnwidth} {!}{
    \includegraphics{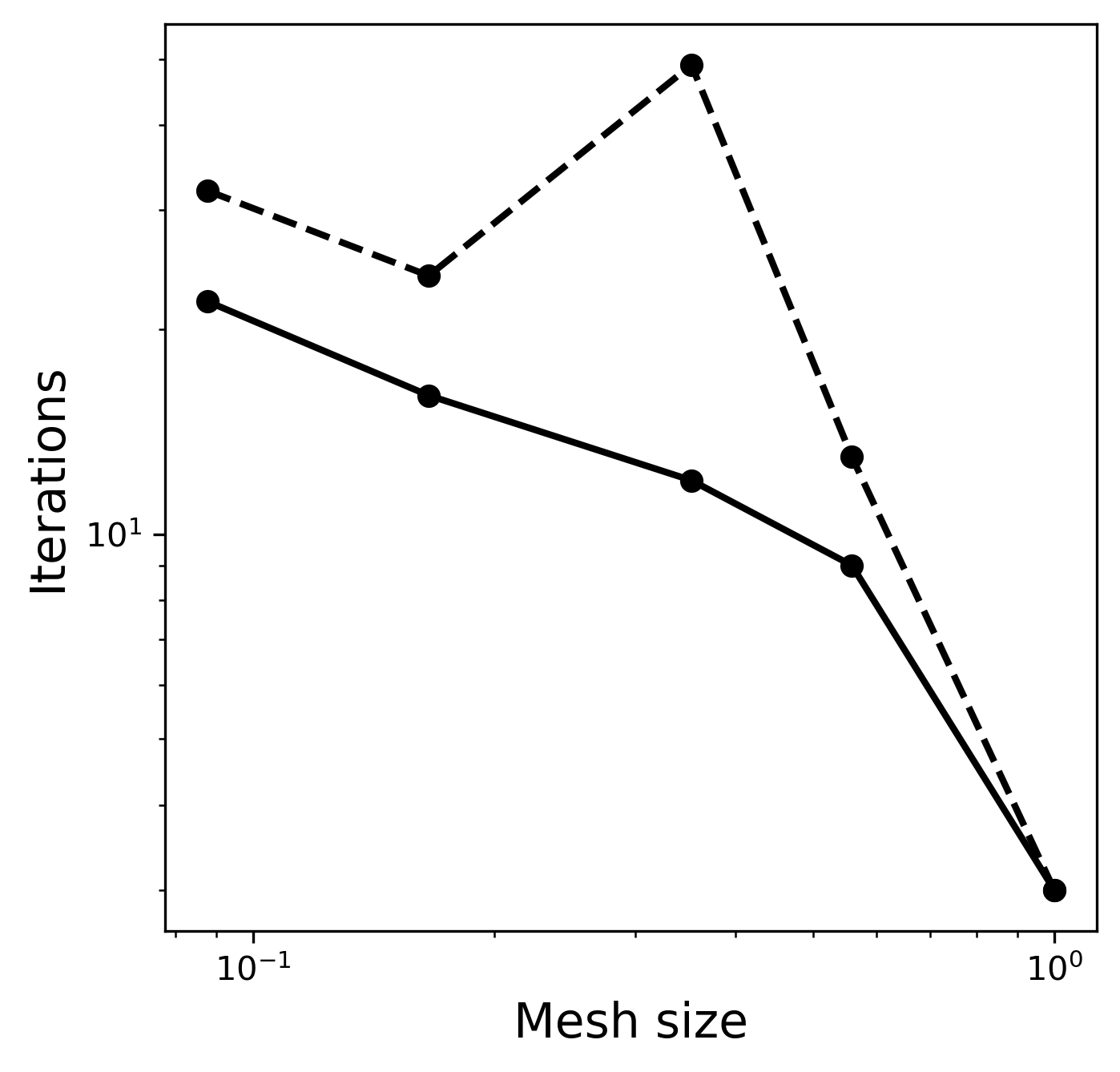}
 }} \hfill
 \subfloat[Time to solve the non-preconditioned system (dashed line), compared with the preconditioned system (solid line) of the whole discrete problem~\eqref{eq:fem_bem_disctrete}  including solving  interior and exterior systems.
\label{fig:cube_time}]{\resizebox {0.32\columnwidth} {!}{
    \includegraphics{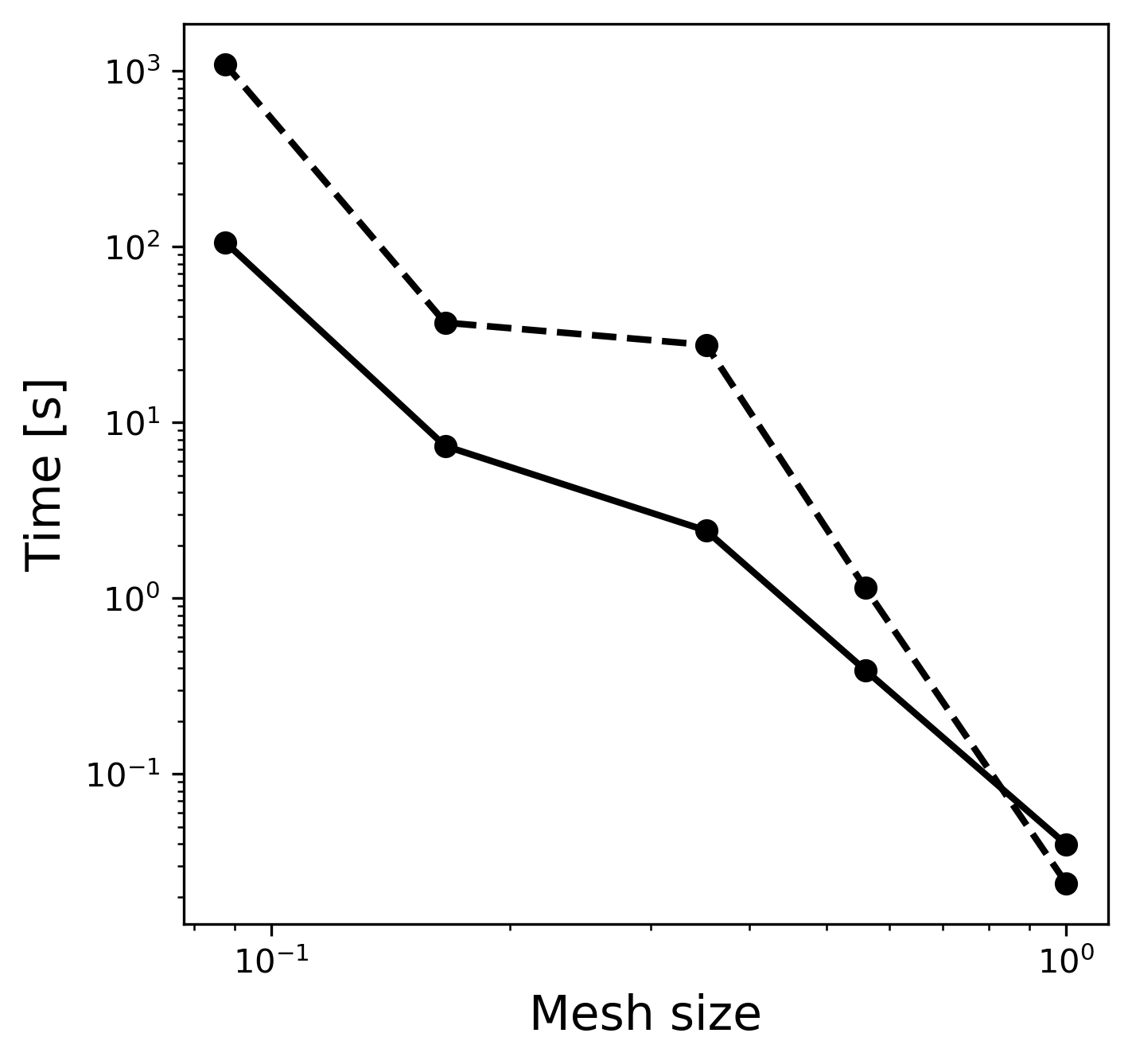}
 }} 
\caption{The convergence (left), CG iteration counts (middle) and solving time (right) for the problem on the cube with $\tau = 10$ and $j = k = m = l+1 = 1$.}
\label{fig:cube}
 \end{figure}

Figure~\ref{fig:cube_convergence} shows the convergence when $\tau = 10$ and $j = k = m = l+1 = 1$. In this case, the exact solution is not known, thus in Figure~\ref{fig:cube_convergence}, we plot in log-log scale the error between interior solution and exterior $\frac{\|u_h^-|_{\Gamma} -u_h^+\|_{L^2(\Gamma)}}{\|u_h^+\|_{L^2(\Gamma)}}$ (solid line).
 
 In Figure~\ref{fig:cube}, we plot as well in log-log scale the number of iterations and solving time taken by CG to solve the non-preconditioned system (dashed line), compared with the preconditioned system (solid line). Once again preconditioning brings improvement in terms of iteration counts and time taken to solve the problem.
 
 \section{Conclusions}
 \label{sec:conclusions}
 
 We have analyzed and demonstrated the effectiveness of Nitsche type methods for coupling finite element and boundary element formulations. Our approach gives flexibility to choose a continuous or discontinuous finite element space in the FEM solver, hence the interior problem can be solved essentially using any method that allows for the hybridised Nitsche method for interdomain coupling. We are also free to choose the trace variable minimising the coupling degrees of freedom.
 In this paper, we focus on the technical aspects and analysis to allow for flexibility within our framework. A work demonstrating the applicability for large problems using parallel approach is in preparation.
 
The method can be extended to other models such as the Helmholtz equations. In this case it is known~\cite{MR2793906} that the use of impedance interface conditions is advantageous and such an approach can be mimicked in the present framework by letting the stabilisation constant have non-zero imaginary part and depend on the wave number.
Formulations of the presented FEM/BEM coupling method to the Helmholtz and Maxwell problems are currently in preparation. For these cases however more effective operator preconditioning techniques for exterior problem are essential, especially for high frequency problems. Despite that, we expect that their implementation will be similar to the presented Laplace case.


\thispagestyle{empty}
  \bibliographystyle{alpha}
\bibliography{FEM_BEM}

\end{document}